\numberwithin{equation}{section}
\newtheorem{theorem}{Theorem}
\newtheorem{corollary}[theorem]{Corollary}
\newtheorem{proposition}[theorem]{Proposition}
\newtheorem{lemma}[theorem]{Lemma}
\theoremstyle{definition}
\newtheorem{remark}[theorem]{Remark}
\newtheorem{definition}[theorem]{Definition}
\numberwithin{theorem}{section}
\numberwithin{equation}{section}
\newcommand{\beq}{\begin{equation}}
\newcommand{\eeq}{\end{equation}}
\newcommand{\E}{\mathcal{E}}
\newcommand{\HH}{\mathbb{H}}
\newcommand{\Q}{\mathcal{Q}}
\newcommand{\R}{\mathbb{R}}
\newcommand{\eps}{\varepsilon}
\newcommand{\dist}{{\mbox{\normalfont dist}}}
\newcommand{\ssubset}{\subset\joinrel\subset}
\DeclareMathOperator*{\essinf}{ess\,inf}
\title[Maximum principles and Hopf's lemmas for the Logarithmic Laplacian]{Antisymmetric maximum principles and Hopf's lemmas for the Logarithmic Laplacian, with applications to symmetry results}
\author[L. Pollastro and N. Soave]{Luigi Pollastro and Nicola Soave}
\address[Luigi Pollastro and Nicola Soave]{Dipartimento di Matematica, Universit\`a degli Studi di Torino, Via Carlo Alberto 10, 10123 Torino (Italy)}
\email{luigi.pollastro@unito.it and nicola.soave@unito.it}
\keywords{Logarithmic Laplacian; Maximum Principles; Hopf's lemma; Symmetry of solutions.}
\subjclass[2020]{Primary 35B06, 35B50, 35J61.}
\thanks{Acknowledgements: the authors are supported by the PRIN Project no. 2022R537CS ``Nodal Optimization, NOnlinear elliptic equations, NOnlocal geometric problems, with a focus on regularity ($NO^3$)" - funded by European Union - Next Generation EU within the PRIN 2022 program (D.D. 104 - 02$/$02$/$2022 Ministero dell\'Universit\`a e della Ricerca, Italy). Both the authors thank the INdAM-GNAMPA group (Italy).}
\begin{document}

\maketitle

\begin{abstract}
We prove antisymmetric maximum principles and Hopf-type lemmas for linear problems described by the Logarithmic Laplacian. As application, we prove the symmetry of solutions for semilinear problems in symmetric sets, and a rigidity result for the parallel surface problem for the Logarithmic Laplacian.
\end{abstract}

\section{Introduction}

The purpose of this paper is to demonstrate some symmetry results, as well as the maximum principles and the Hopf-type lemmas necessary for their proofs, for semilinear problems involving the Logarithmic Laplacian $L_\Delta$. The Logarithmic Laplacian is the pseudo differential operator with symbol $2\log|\xi|$ (here and in what follows, $\log$ denotes the natural Logarithm). It can be seen as the first order term in the Taylor expansion of the fractional Laplacian $(-\Delta)^s$ as $s$ goes to $0$, in the sense that, for $\varphi \in C^\infty_c(\R^N)$, 
\[
(-\Delta)^s \varphi = \varphi + s L_\Delta \varphi + o(s) \qquad \text{for $s \to 0^+$, in $L^p(\R^N)$ with $1<p \le \infty$},
\]
see \cite[Theorem 1.1]{chen2019dirichlet}. Moreover, for sufficiently regular functions, one has the pointwise integral representation
\begin{equation}\label{pointwise L D}
\begin{split}
L_\Delta u(x) &= c_N \int_{\R^N} \frac{u(x) \chi_{B_1(x)}(y)- u(y)}{|x - y|^N} \, dy + \rho_N u(x) \\
& = c_N \int_{B_1(x)} \frac{u(x)-u(y)}{|x-y|^N}\,dy - c_N \int_{\R^N \setminus B_1(x)} \frac{u(y)}{|x-y|^N}\,dy + \rho_N u(x),
\end{split}
\end{equation}
where both $c_N, \rho_N$ are positive dimensional constants computed explicitly.

Starting from \cite{chen2019dirichlet}, the operator $L_\Delta$ attracted a lot of attention, in light of both its theoretical interest and its relevance in some applications. From the theoretical point of view, the kernel in \eqref{pointwise L D} is \emph{of zero-order}, in the sense that it is a limiting case for hypersingular integrals. Thus, the regularizing properties are very weak and not completely understood at the moment. We refer the interested reader to \cite{ChLaSa, chen2019dirichlet,FeJa23, hernandez2024optimal} and references therein. From the point of view of the applications, the Logarithmic Laplacian is used to describe the differentiability properties of the solution mapping of fractional Dirichlet problems, and the behavior of solutions to linear and nonlinear problems involving the fractional Laplacian, in the small order limit $s \to 0^+$, see \cite{chen2019dirichlet, JaSaWe, HerSal22}. 

A notable difference with respect to both the standard and the fractional Laplacian is that the operator $L_\Delta$ \emph{does not} satisfy the maximum principle in general sets, see \cite[Corollary 1.10]{chen2019dirichlet}. This is equivalent to the fact that the first eigenvalue of the Dirichlet problem for $L_\Delta$ is not always strictly positive. Some maximum principles and Hopf-type lemmas for positive supersolutions were already established in \cite{chen2019dirichlet} and \cite{hernandez2024optimal}, respectively.
Our aim is to provide analogue results in the antisymmetric setting and apply them to prove two symmetry results for semilinear problems. More precisely, for antisymmetric supersolutions, we prove a general weak maximum principle, a weak maximum principle in sets with small measure, two Hopf-type lemmas, and a strong maximum principle. We also extend these results removing the antisymmetry assumptions (in such case the results are partially already known, and the proofs are in fact easier). Regarding the applications, the first one is the extension in the logarithmic setting to the celebrated Gidas-Ni-Nirenberg symmetry result \cite{GiNiNi}:

\begin{theorem}
\label{GNN_convex_domain}
Let $\Omega \subset \R^N$ be a bounded open set with Lipschitz boundary, convex with respect to the direction $e_1$, and symmetric with respect to $\{ x_1  = 0 \}$. Let also $f: \R \to \R$ be a locally Lipschitz function, and assume that $u \in C(\overline{\Omega}) \cap \HH(\Omega)$ is a strictly positive weak solution of
\begin{equation}
\begin{cases}
L_\Delta u = f(u) &\textrm{in} \ \Omega,\\
u= 0 &\textrm{in} \ \R^N \setminus \Omega.
\end{cases}
\end{equation}
Then, $u$ is symmetric with respect to $\{ x_1 = 0 \}$, and decreasing in the direction $e_1$ in $\Omega \cap \{x_1>0\}$.
\end{theorem}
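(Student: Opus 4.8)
The plan is to use the moving planes method in the direction $e_1$, which is the classical strategy behind the Gidas--Ni--Nirenberg theorem, adapted to the nonlocal setting of $L_\Delta$. For $\lambda \in \R$ write $H_\lambda = \{x_1 < \lambda\}$, let $x^\lambda = (2\lambda - x_1, x_2, \dots, x_N)$ be the reflection of $x$ across the hyperplane $T_\lambda = \{x_1 = \lambda\}$, and set $\Omega_\lambda = \Omega \cap H_\lambda$ and $u_\lambda(x) = u(x^\lambda)$. The function $w_\lambda := u_\lambda - u$ is antisymmetric with respect to $T_\lambda$ by construction. Using the integral representation \eqref{pointwise L D} together with the convexity of $\Omega$ in the $e_1$ direction (which guarantees $\Omega_\lambda^\lambda \subseteq \Omega$ for admissible $\lambda$) and the fact that $u$ vanishes outside $\Omega$, one shows that $w_\lambda$ is an antisymmetric supersolution of the linearized equation: since $f$ is locally Lipschitz and $u$ is bounded,
\[
L_\Delta w_\lambda = f(u_\lambda) - f(u) = c_\lambda(x)\, w_\lambda \qquad \text{in } \Omega_\lambda,
\]
with $c_\lambda \in L^\infty(\Omega_\lambda)$, and $w_\lambda \ge 0$ in $H_\lambda \setminus \Omega_\lambda$.

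Next I would run the moving planes scheme. Let $\lambda_0 = \inf\{x_1 : x \in \Omega\}$ and set $\Lambda = \{\lambda \in (\lambda_0, 0] : w_\mu \ge 0 \text{ in } \Omega_\mu \text{ for all } \mu \in (\lambda_0, \lambda]\}$. The \emph{starting step} is that $\Lambda$ is nonempty: for $\lambda$ close to $\lambda_0$ the cap $\Omega_\lambda$ has small Lebesgue measure, so the weak maximum principle in sets of small measure (stated earlier in the paper for antisymmetric supersolutions, with the threshold depending only on $N$ and $\|c_\lambda\|_\infty$, which is uniformly bounded since $f$ is Lipschitz on the range of $u$) forces $w_\lambda \ge 0$. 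For the \emph{continuity/closedness step}, if $\lambda_n \uparrow \bar\lambda \in \Lambda$ then by continuity $w_{\bar\lambda} \ge 0$, so $\bar\lambda \in \Lambda$, i.e.\ $\Lambda$ is closed in $(\lambda_0,0]$. The crucial \emph{openness step}: suppose $\bar\lambda := \sup \Lambda < 0$; I want a contradiction. On $\Omega_{\bar\lambda}$ we have $w_{\bar\lambda} \ge 0$ and $w_{\bar\lambda} \not\equiv 0$ (otherwise $u$ would be symmetric about $T_{\bar\lambda}$ and, being positive in $\Omega$ and zero outside, this is incompatible with $\bar\lambda < 0$ unless $\Omega$ is symmetric about $T_{\bar\lambda}$, which the $e_1$-convexity and the geometry rule out). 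Then the strong maximum principle for antisymmetric supersolutions gives $w_{\bar\lambda} > 0$ in the interior of $\Omega_{\bar\lambda}$. Now split $\Omega_{\bar\lambda + \delta}$ for small $\delta > 0$ into a compact part $K \ssubset \Omega_{\bar\lambda}$ where $w_{\bar\lambda} \ge \eta > 0$ (and by continuity in $\lambda$, $w_{\bar\lambda+\delta} > 0$ on $K$ too) and a thin remainder $\Omega_{\bar\lambda+\delta} \setminus K$ of small measure, on which we again invoke the small-measure weak maximum principle for the antisymmetric supersolution $w_{\bar\lambda+\delta}$ — being careful that the nonlocal coupling between the thin set and $K$ is controlled because $w_{\bar\lambda+\delta} \ge 0$ on $K$ and $w_{\bar\lambda+\delta} \ge 0$ in $H_{\bar\lambda+\delta}\setminus \Omega_{\bar\lambda+\delta}$ as well. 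This yields $w_{\bar\lambda+\delta} \ge 0$ on all of $\Omega_{\bar\lambda+\delta}$, contradicting maximality of $\bar\lambda$. Hence $\bar\lambda = 0$, giving $w_0 \ge 0$, i.e.\ $u(x^0) \ge u(x)$ for $x \in \Omega \cap \{x_1 > 0\}$. Applying the same argument in the direction $-e_1$ yields the reverse inequality, so $u$ is symmetric with respect to $\{x_1 = 0\}$; monotonicity $\partial_{e_1} u < 0$ in $\Omega \cap \{x_1 > 0\}$ then follows from the strong maximum principle (or a Hopf-type lemma) applied to $w_\lambda$ for $\lambda \in (\lambda_0, 0)$, which gives $w_\lambda > 0$ strictly inside each cap.

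The main obstacle I anticipate is handling the nonlocal interactions carefully in the maximum-principle steps — unlike the local case, the sign of $L_\Delta w_\lambda$ at a point in $\Omega_\lambda$ depends on the values of $w_\lambda$ over all of $\R^N$, so one must systematically exploit antisymmetry (so that contributions from $H_\lambda$ and its reflection combine with the right sign, using that the kernel $|x-y|^{-N}$ compares favorably for $y \in \Omega_\lambda$ versus $y^\lambda$) and the boundary condition $u \equiv 0$ outside $\Omega$ together with $e_1$-convexity of $\Omega$ to guarantee $w_\lambda \ge 0$ in $H_\lambda \setminus \Omega_\lambda$. A secondary technical point is verifying that $w_\lambda$ genuinely lies in the function space $\HH$ (or whatever weak-solution class the earlier maximum principles require) and that the linearization $c_\lambda(x) = \bigl(f(u_\lambda(x)) - f(u(x))\bigr)/w_\lambda(x)$ (set to $0$ where $w_\lambda = 0$) is a bona fide $L^\infty$ function with norm controlled uniformly in $\lambda$ by the Lipschitz constant of $f$ on $[0, \max_{\overline\Omega} u]$ — this uniform bound is what makes the small-measure threshold in the weak maximum principle usable throughout the moving planes procedure. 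Everything else is a transcription of the classical Berestycki--Nirenberg / Gidas--Ni--Nirenberg scheme, with the three ingredients (small-measure weak maximum principle, strong maximum principle, Hopf's lemma for antisymmetric functions) supplied by the earlier sections of the paper.
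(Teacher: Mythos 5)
Your proposal is correct and follows essentially the same route as the paper: set up the antisymmetric difference $w_\lambda$ satisfying the linearized equation with a uniformly bounded zeroth-order coefficient, start the moving planes with the small-measure antisymmetric weak maximum principle, and advance to the critical position by combining the antisymmetric strong maximum principle with the compact-exhaustion / thin-remainder argument feeding back into the small-measure weak maximum principle. The only differences are cosmetic — you move the plane from the left, the paper from the right, hence a small sign slip when you report the final inequality $u(x^0)\ge u(x)$ on $\{x_1>0\}$ instead of on your cap $\{x_1<0\}$ — and the theorem only claims (strict) monotonicity, not differentiability, so the claim $\partial_{e_1}u<0$ is a slight overstatement of what is actually established.
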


We refer to Section \ref{section_2} for the definitions of weak solution and the functional space $\HH(\Omega)$.

An immediate application of Theorem \ref{GNN_convex_domain} to the case where $\Omega$ is a ball $B_R$ yields the radial symmetry in $B_R$ for the Logarithmic Laplacian. The analogue result for the fractional Laplacian was proved in \cite{FeWa}.

\begin{corollary}
Let $f: \R \to \R$ be a locally Lipschitz function, and assume that $u \in C(\overline{B_R}) \cap \HH(B_R)$ is a strictly positive weak solution of
\begin{equation}
\begin{cases}
L_\Delta u = f(u) &\textrm{in} \ B_R,\\
u= 0 &\textrm{in} \ \R^N \setminus B_R.
\end{cases}
\end{equation}
Then, $u$ is radial and radially decreasing.
\end{corollary}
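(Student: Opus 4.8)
The plan is to reduce the corollary to Theorem~\ref{GNN_convex_domain} applied along every direction, exploiting the rotational invariance of $L_\Delta$. Since the symbol $2\log|\xi|$ of $L_\Delta$ depends only on $|\xi|$, for every rotation $\RR \in O(N)$ one has $L_\Delta(u\circ\RR) = (L_\Delta u)\circ\RR$; moreover $\RR$ preserves the functional space $\HH(B_R)$ and the notion of weak solution introduced in Section~\ref{section_2} (all the relevant integral quantities are invariant because $|\RR x - \RR y| = |x-y|$), and it maps $B_R$ onto itself. Hence, for any fixed unit vector $e \in \R^N$, choosing $\RR$ with $\RR e_1 = e$, the function $w := u\circ\RR \in C(\overline{B_R})\cap\HH(B_R)$ is again a strictly positive weak solution of $L_\Delta w = f(w)$ in $B_R$ with $w = 0$ in $\R^N\setminus B_R$.

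Next I would apply Theorem~\ref{GNN_convex_domain} to $w$, using that $B_R$ is bounded with Lipschitz boundary, convex in the direction $e_1$, and symmetric with respect to $\{x_1 = 0\}$. This yields that $w$ is symmetric with respect to $\{x_1 = 0\}$ and decreasing in the $e_1$ direction on $B_R\cap\{x_1 > 0\}$. Undoing the rotation, $u$ is symmetric with respect to the hyperplane $\{x\cdot e = 0\}$ and, for every $x\in B_R$ with $x\cdot e > 0$ and every $\tau > 0$ with $x + \tau e\in B_R$, one has $u(x+\tau e) \le u(x)$. Since $e \in \R^N$ with $|e|=1$ is arbitrary, this holds in every direction.

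To conclude, observe first that $u$ is invariant under the reflection with respect to every hyperplane through the origin; since such reflections generate the whole orthogonal group $O(N)$, which acts transitively on each sphere $\{|x| = r\}$, $u$ is invariant under $O(N)$, i.e. it is radial. For the monotonicity, given $0 < r_1 < r_2 < R$, apply the directional monotonicity along $e = e_1$ at the point $x = r_1 e_1$ with $\tau = r_2 - r_1$: this gives $u(r_2 e_1) \le u(r_1 e_1)$, and by radiality the radial profile of $u$ is nonincreasing on $(0,R)$, which is the desired conclusion.

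I do not expect a genuine obstacle here: the argument is a standard ``apply the symmetry result in all directions'' scheme. The only points requiring a little care are the invariance of the weak formulation and of the space $\HH(B_R)$ under rotations (immediate from $|\RR x - \RR y| = |x-y|$ together with the fact that $\RR$ commutes with $L_\Delta$), and the elementary group-theoretic fact that the reflections in hyperplanes through the origin generate $O(N)$ and hence act transitively on spheres centered at the origin.
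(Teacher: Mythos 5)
Your argument is correct and is exactly the reasoning the paper has in mind: the corollary is stated there as an ``immediate application'' of Theorem~\ref{GNN_convex_domain}, and the standard way to make that precise is, as you do, to exploit the rotational invariance of $L_\Delta$ (and of $B_R$ and the weak formulation) to apply the theorem in every direction, then deduce radial symmetry and monotonicity. The only cosmetic point is that Theorem~\ref{GNN_convex_domain} actually yields \emph{strict} monotonicity in each direction (its proof shows $w_\lambda>0$ for all $\lambda\in(0,\Lambda)$), so one gets ``radially decreasing'' rather than merely ``nonincreasing'' as written in your last step.
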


The second application concerns the so called \emph{parallel surface problem}, a variant of the Serrin's overdetermined problem. Let $G \subset \R^N$ be a bounded open set, and 
\[
\Omega := G + B_R = \{ x+ y \in \R^N: \ x \in G, \ |y|<R\},
\] 
the Minkowski sum of the set $G$ with a ball of radius $R>0$. Our goal is to investigate the rigidity property of the problem
\begin{equation}\label{semil pb}
\begin{cases}
L_\Delta u = f(u), \quad u>0  &\textrm{in} \ \Omega,\\
u = 0  &\textrm{in} \ \R^N \setminus \Omega,
\end{cases}
\end{equation}
under the overdetermined condition
\begin{equation}
\label{parallel_surface_condition}
u = c \quad \textrm{on} \ \partial G.
\end{equation}

\begin{theorem}
\label{parallel_surface_log_torsion}
Let $G \subset \R^N$ be a bounded open set with $C^1$ boundary and $\Omega := G + B_R$ the Minkowski sum of the set $G$ with a ball of radius $R>0$. Let also $f: \R \to \R$ be a locally Lipschitz continuous function, and assume that $u \in C(\overline{\Omega}) \cap \HH(\Omega)$ satisfies \eqref{semil pb} under the overdetermined condition \eqref{parallel_surface_condition}. Then $G$ and $\Omega$ are concentric balls and $u$ is radial and radially decreasing about the center of both $G$ and $\Omega$.
\end{theorem}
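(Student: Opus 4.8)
The standard approach to the parallel surface problem—originally due to Serrin's moving plane method, adapted to overdetermined problems via the work of Ciraolo–Magnanini–Vespri and others, and to nonlocal settings by Fall–Jarohs and more recently for the fractional Laplacian—proceeds by showing that $\Omega$ is symmetric with respect to every hyperplane through a suitable point. The plan is to run the method of moving planes in an arbitrary direction, say $e_1$, using the antisymmetric maximum principles and Hopf's lemmas for $L_\Delta$ established earlier in the paper.

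Here is the outline. First, fix a direction $e = e_1$ and, for $\lambda \in \R$, let $T_\lambda = \{x_1 = \lambda\}$, let $\Omega_\lambda = \Omega \cap \{x_1 > \lambda\}$, and let $Q_\lambda$ denote reflection across $T_\lambda$. Define the antisymmetric difference $w_\lambda(x) = u(x) - u(Q_\lambda x)$ on $\Omega_\lambda$, extended antisymmetrically; using that $f$ is locally Lipschitz, $w_\lambda$ satisfies an antisymmetric linear inequality $L_\Delta w_\lambda + c_\lambda(x) w_\lambda \ge 0$ (or $\le 0$, depending on sign conventions) in $\Omega_\lambda \cap \{w_\lambda$ has the appropriate sign$\}$, with $c_\lambda \in L^\infty$. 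Starting from $\lambda$ close to $\sup_{x\in\Omega} x_1$, the set $\Omega_\lambda$ has small measure, so the weak maximum principle in sets of small measure (one of the antisymmetric results announced in the introduction) gives $w_\lambda \le 0$ in $\Omega_\lambda$. One then decreases $\lambda$ and defines $\lambda_* = \inf\{\lambda : w_\mu \le 0 \text{ in } \Omega_\mu \text{ for all } \mu \ge \lambda\}$. A continuity/limiting argument gives $w_{\lambda_*} \le 0$ in $\Omega_{\lambda_*}$; the key step is to show $w_{\lambda_*} \equiv 0$, i.e. that $\Omega$ is symmetric about $T_{\lambda_*}$.

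The crucial use of the overdetermined condition \eqref{parallel_surface_condition} enters exactly here, and I expect this to be the main obstacle. If $w_{\lambda_*} \not\equiv 0$, the strong maximum principle for antisymmetric functions forces $w_{\lambda_*} < 0$ in the interior of $\Omega_{\lambda_*}$. The definition of $\lambda_*$ as a critical value then yields, by the usual alternatives, either a boundary contact point of $\partial\Omega_{\lambda_*}$ on $T_{\lambda_*}$ where the domain is internally tangent, or a contact point of $Q_{\lambda_*}(\Omega_{\lambda_*})$ with $\partial\Omega$. In the classical Serrin argument one derives a contradiction from Serrin's corner lemma; in the nonlocal parallel-surface setting one instead exploits the special geometry $\Omega = G + B_R$: the reflected cap $Q_{\lambda_*}(\Omega_{\lambda_*})$ is contained in $\overline{\Omega}$, and at the contact point $p \in \partial\Omega$ one has $u(p) = 0 = u(Q_{\lambda_*} p)$ while the overdetermined condition $u = c$ on $\partial G$ (together with the parallel-surface structure: $p \in \partial\Omega$ and $Q_{\lambda_*}p$ both lie at distance $R$ from $G$, forcing the corresponding points of $\partial G$ to be related) pins down the normal derivative of $u$ at $p$ and at $Q_{\lambda_*}p$. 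Applying the antisymmetric Hopf lemma to $w_{\lambda_*}$ at $p$ shows the outer normal derivative of $w_{\lambda_*}$ is strictly negative, contradicting the equality of the (nonlocal) normal-derivative data coming from $u = c$ on $\partial G$ on both sides. Hence $w_{\lambda_*} \equiv 0$, so $\Omega$, $G$, and $u$ are all symmetric with respect to $T_{\lambda_*}$ in the direction $e_1$.

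Since $e_1$ was arbitrary, repeating the argument in every direction shows that for each direction $e$ there is a hyperplane orthogonal to $e$ of symmetry for $\Omega$; a standard fact then implies all these hyperplanes pass through a common point $x_0$, so $\Omega$ is a ball centered at $x_0$. The same holds for $G$, and since $\Omega = G + B_R$, the two balls are concentric. Finally, monotonicity of $u$ in every direction toward $x_0$—which is a byproduct of the moving planes argument ($\partial_{e} u < 0$ on the positive cap, via Hopf applied to $w_\lambda$ for $\lambda$ slightly above $\lambda_*$, together with $w_{\lambda_*}\equiv 0$)—gives that $u$ is radially symmetric and radially decreasing about $x_0$. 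The technical points requiring care, beyond the contact-point analysis above, are: (i) justifying that $w_\lambda$ is an admissible test-competitor in $\HH$ and that the linear inequality holds in the weak sense used by the maximum principles (regularity of $u$ up to $\partial\Omega$ and the $C^1$ regularity of $\partial G$ are used here); (ii) the nonlocal nature of $L_\Delta$, which means $w_\lambda$ picks up contributions from the whole reflected configuration, so one must check the kernel monotonicity $|x-y|^{-N}$ vs. $|x - Q_\lambda y|^{-N}$ has the right sign to keep the zero-order term of $L_\Delta$ under control in the antisymmetric inequality; and (iii) the limiting argument at $\lambda = \lambda_*$, for which the weak maximum principle in small-measure sets and the continuity of $\lambda \mapsto |\Omega_\lambda|$ are the essential ingredients.
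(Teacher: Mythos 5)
The outline of moving planes and passage to the critical hyperplane is on track, but the crucial step is set up around the wrong set. In the paper, moving planes is run \emph{on $G$, not on $\Omega$}: because $\Omega = G + B_R$, the critical position for $G$ coincides with the critical position for $\Omega$, so the two alternatives at criticality (internal tangency, or orthogonality of $T$ to the boundary) occur on $\partial G$, which lies strictly \emph{inside} $\Omega$ and where the overdetermined datum $u \equiv c$ is prescribed. With this observation, the internal-tangency case collapses immediately: the tangency point $P \in \partial G$ and its reflection $\mathcal{Q}(P) \in \partial G$ are both interior points of $\Omega$, so $w(P) = u(\mathcal{Q}(P)) - u(P) = c - c = 0$, which directly contradicts the antisymmetric strong maximum principle (Corollary \ref{anti_strong_max_pinciple_log_Lap}); no normal derivative, no Serrin corner lemma, no comparison of ``nonlocal normal-derivative data" is involved. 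Your proposal instead locates the contact point $p$ on $\partial\Omega$ where $u = 0$ and tries to argue via normal derivatives there --- this is the geometry of the classical Serrin overdetermined problem, not of the parallel surface problem, and it would force you to confront the boundary corner argument that the $G$-centered setup is designed to bypass. The discussion of how the overdetermined condition ``pins down the normal derivative of $u$ at $p$ and at $Q_{\lambda_*}p$" does not lead anywhere concrete and is not the mechanism used.

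For the orthogonality case, the paper's argument also happens \emph{inside} $\Omega$: if $T$ meets $\partial G$ orthogonally at $Q \in T$, then $e_1$ is tangent to $\partial G$ at $Q$, and since $u \equiv c$ on $\partial G$ one deduces that $u(he_1) - u(-he_1)$ is $o(h)$; this is incompatible with the Hopf-type lemma on the symmetry hyperplane (the statement following Lemma \ref{interior_anti_Hopf_lemma_log_lap}), which gives $\liminf_{h\to 0^+} w_0(he_1)/h > 0$. Again, everything is pushed to interior points on $\partial G$ so that only the \emph{antisymmetric} maximum principles and Hopf lemmas of Section \ref{section_2} are needed, and no boundary regularity of $u$ at $\partial\Omega$ beyond continuity. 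To repair your proposal, replace the moving-plane bookkeeping on $\Omega$ with moving planes on $G$ (keeping $w_\lambda = u \circ \mathcal{Q}_\lambda - u$ on $\Sigma_\lambda = \Omega \cap H_\lambda$, since the PDE lives on $\Omega$), and in the critical-position dichotomy read off the contradiction from the overdetermined condition \eqref{parallel_surface_condition} at points of $\partial G$.
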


Theorem \ref{parallel_surface_log_torsion} is the version of \cite[Theorem 2.1]{CiMaSa} for the Logarithmic Laplacian\footnote{Actually, Theorem 2.1 is focused on the torsion problem, namely $f(t) =1$.}. We also refer to \cite{CiMaSa15, Shah} for related results, and to \cite{ciraolo2023symmetry} for the fractional Laplacian case.

The strategy for proving the aforementioned results closely follows the classical one. However, to make this adaptation, it is necessary to prove maximum principles and Hopf's lemmas for antisymmetric solutions of semilinear problems described by the Logarithmic Laplacian. This is the content of Section \ref{section_2}, and is the most novel part of the work.

Both in Theorem \ref{GNN_convex_domain} and in Theorem \ref{parallel_surface_log_torsion}, we do not require that $\Omega$ is connected. This is a typical feature of nonlocal problems, already observed, e.g., in \cite{ciraolo2023symmetry, fall2015overdetermined}. Furthermore, and more importantly, we do not make any assumptions about the measure of $\Omega$. This is because the proofs rely solely on the use of maximum principles and Hopf's lemmas on appropriate subsets of $\Omega$, as is customary in the method of moving planes. 

Finally, we point out that the existence of solutions for problem \eqref{semil pb} is not always granted, not even in the special case $f \equiv 1$. If $L_\Delta$ satisfies the maximum principle on $\Omega$, which is equivalent to require that the first eigenvalue of the Dirichlet-log-Laplacian on $\Omega$ is strictly positive (see \cite[Theorem 4.8]{chen2019dirichlet}), then the Poisson problem
\[
\begin{cases}
L_\Delta u = g  &\textrm{in} \ \Omega\\
u = 0  &\textrm{in} \ \R^N \setminus \Omega
\end{cases}
\]
admits a weak solution for any $g \in L^2(\Omega)$, by the Lax-Milgram theorem. The strict positivity of $\lambda_1^L(\Omega)$ is ensured for domains with small measure, as proved in \cite{chen2019dirichlet}. But it can happen that $\lambda_1^L(\Omega) \le 0$ and that the Poisson problem in $\Omega$ does not have a classical solution for some $g$, see \cite[Corollary 1.10]{chen2019dirichlet} and \cite[Theorem 1.1]{ChLaSa}. It should be noted that it is not possible to rescale the problem in such a way as to reduce it to the case of sets with small measure, since the operator does not behave well with respect to rescaling \cite[Appendix A.1]{hernandez2024optimal}.

The structure of the paper is the following: in Section \ref{sec: pre}, we introduce the basic notation, functional spaces, and preliminary results which will be used in the rest of the paper. In Section \ref{section_2}, we state and prove the antisymmetric maximum principles and Hopf lemmas for the Logarithmic Laplacian. Section \ref{sec: GNN} and \ref{sec: parallel} contain the proof of Theorems \ref{GNN_convex_domain} and \ref{parallel_surface_log_torsion}, respectively. Finally, in Appendix A we discuss what changes in the results of Sections 3 and 4 in \cite{chen2019dirichlet} when one considers an open bounded set $\Omega$ instead of a domain, which is needed in order to allow disconnected sets in Theorems \ref{GNN_convex_domain} and \ref{parallel_surface_log_torsion}.

\section{Setting, notation and preliminaries}\label{sec: pre}


Following \cite{chen2019dirichlet, FeKaVo}, we recall the weak formulation of equations and ``boundary value problems" (or, better, exterior data problems) associated with $L_\Delta$. We start by defining $k: \R^N \setminus \{0\} \to \R$ and $j: \R^N \to \R$ by
\begin{equation}\label{def k e j}
k(z) := \frac{c_N}{|z|^N} \chi_{B_1}(z) \quad \textrm{and} \quad j(z):= \frac{c_N}{|z|^N} \chi_{B_1^c} (z),
\end{equation}
and we set
\[
\HH(\R^N):= \left\{u:\R^N \to \R \ \text{measurable}: \ u \in L^2(\R^N) \quad \text{and} \quad \mathcal{E}(u,u)^{1/2}<+\infty\right\},
\]
where
\[
\mathcal{E}(u,v) :=\frac12 \iint_{\R^N \times \R^N} (u(x) - u(y)) (v(x) - v(y)) k(x-y)\, dx dy\,.
\]
Moreover, for any open set $\Omega \subset \R^N$, we consider
\[
\HH(\Omega):= \left\{ \ u \in \HH(\R^N): \ u=0 \ \text{a.e. in $\Omega^c$} \ \right\}.
\]
It is possible to check that $\HH(\R^N)$ is a Hilbert space with inner product 
\[
\langle u, v \rangle_{\HH(\R^N)} := \int_{\R^N} uv\,dx + \mathcal{E}(u,v),
\]
and $[u]_{\HH(\R^N)}:= \mathcal{E}(u,u)^{1/2}$ is a seminorm. Moreover, 
\[
\left(\HH(\Omega), \,\|\cdot\|_{\HH(\R^N)}\right) \hookrightarrow \left(\HH(\R^N) , \,\|\cdot\|_{\HH(\R^N)}\right),
\]
and, by \cite[Lemma 2.7]{FeKaVo}, if $\Omega$ is also bounded we have that 
\[
\inf_{u \in \HH(\Omega) \setminus \{0\}} \frac{\mathcal{E}(u,u)}{\|u\|_{L^2(\Omega)}^2} >0.
\] 
Thus, $\mathcal{E}(u,u)^{1/2}$ is a norm in $\HH(\Omega)$, equivalent to the standard one. Finally, the embedding $\HH(\Omega) \hookrightarrow L^2(\Omega)$ is compact \cite[Theorem 2.1]{CoDP}.

As proved in \cite{chen2019dirichlet}, the bilinear form associated to $L_\Delta$ is
\begin{equation}\label{def EL}
\E_L(u,v) := \E (u,v) - \iint_{\R^N \times \R^N} u(x) v(y) j(x - y) \, dx dy + \rho_N \int_{\R^N} u v\,dx,
\end{equation}
which is well defined for $u, v \in \HH(\Omega)$ whenever $\Omega$ is a bounded open set with Lipschitz boundary\footnote{In \cite{chen2019dirichlet}, it is also assumed that $\Omega$ is connected. On the other hand, to check that $\E_L$ is well defined on $\HH(\Omega) \times \HH(\Omega)$, it is used that $\Omega$ is bounded and that the space of uniformly Dini continuous functions in $\Omega$ with compact support is dense in $\Omega$, which follows from \cite[Theorem 3.1]{chen2019dirichlet}. It is plain that \cite[Theorem 3.1]{chen2019dirichlet} holds also for disconnected set, so that the connectedness assumption on $\Omega$ can be removed.}. In fact, in order to consider functions with non-zero values outside of $\Omega$, we observe that $\E_L(u,v)$ is well defined also for $u \in \HH(\R^N)$ and $v \in \HH(\Omega)$. Indeed, the first and the last term in the definition of $\E_L$ are clearly well defined. As far as the second term is concerned, one has that
\[
u \in L^2(\R^N) \mapsto j * u \in L^2(\Omega)
\]
since $\Omega$ is bounded and $j \in L^r(\R^N)$ for every $r>1$. This allows us to give the following definitions:

\begin{definition}
Let $g \in L^2(\Omega)$. A function $u\in \HH(\R^N)$ is a \emph{(weak) supersolution} (resp. \emph{(weak) subsolution}) of $L_\Delta \varphi = g$ in $\Omega$, namely $L_\Delta u \ge g$ in $\Omega$ (resp. $L_\Delta u \le g$ in $\Omega$), if
\[
\E_L(u,\phi) \ge \int_{\Omega} g \phi\,dx  \quad \bigg(\text{resp.} \quad \E_L(u,\phi) \le \int_{\Omega} g \phi\,dx\bigg) \quad \text{for every nonnegative $\phi \in \HH(\Omega)$.}
\]
It is a \emph{(weak) solution} of $L_\Delta \varphi = g$ in $\Omega$ if it is both a supersolution and a subsolution.
\end{definition}

We also have the useful alternative representation of $\E_L(u,u)$ for functions $u\in \HH(\Omega)$.
\begin{proposition}[\cite{chen2019dirichlet}, Proposition 3.2]
For $u \in \HH(\Omega)$ we have
\begin{equation}
\label{EL_alternative}
\E_L(u,u) = \frac{c_N}{2} \iint_{\Omega \times \Omega} \frac{(u(x) - u(y))^2}{|x-y|^N} \, dxdy + \int_{\Omega} (h_\Omega (x) + \rho_N) u^2(x) \, dx,
\end{equation}
where 
$$
h_\Omega(x):= c_N \left( \int_{B_1(x) \setminus \Omega} \frac{1}{|x-y|^N} \, dy - \int_{\Omega \setminus B_1(x)} \frac{1}{|x-y|^N} \, dy \right).
$$
\end{proposition}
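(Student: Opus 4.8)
The plan is to exploit, for $u\in\HH(\Omega)$, the defining property $u=0$ a.e.\ in $\Omega^c$ in order to collapse every integral over $\R^N$ (or $\R^N\times\R^N$) in the definition \eqref{def EL} of $\E_L$ to an integral over $\Omega$, over $\Omega\times\Omega$, or over $\Omega\times\Omega^c$, and then to regroup the resulting truncated singular integrals. The pieces supported on $\{|x-y|\ge1\}$ will partly reassemble the negative part of the weight $h_\Omega$ and partly cancel the $j$-convolution, while the pieces supported on $\{|x-y|<1\}$, together with one extra far contribution coming from the region $\Omega\times\Omega^c$, will rebuild the untruncated Gagliardo integral $\tfrac{c_N}{2}\iint_{\Omega\times\Omega}|x-y|^{-N}(u(x)-u(y))^2$.

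Concretely, I would first treat $\E(u,u)$ by splitting $\R^N\times\R^N$ into the four blocks $\Omega\times\Omega$, $\Omega\times\Omega^c$, $\Omega^c\times\Omega$, $\Omega^c\times\Omega^c$. The last block vanishes because $u\equiv0$ there; on $\Omega\times\Omega^c$ one has $(u(x)-u(y))^2=u(x)^2$, and since $k$ is even the blocks $\Omega\times\Omega^c$ and $\Omega^c\times\Omega$ together give $\int_\Omega u^2(x)\big(\int_{\Omega^c}k(x-y)\,dy\big)\,dx=c_N\int_\Omega u^2(x)\big(\int_{B_1(x)\setminus\Omega}|x-y|^{-N}\,dy\big)\,dx$, which is exactly the positive part of $\int_\Omega h_\Omega u^2$. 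On the block $\Omega\times\Omega$ I would pass from the truncated kernel $k(x-y)=c_N|x-y|^{-N}\chi_{B_1(x)}(y)$ to the full one, writing $\tfrac12\iint_{\Omega\times\Omega}(u(x)-u(y))^2k(x-y)=\tfrac{c_N}{2}\iint_{\Omega\times\Omega}\tfrac{(u(x)-u(y))^2}{|x-y|^N}-\tfrac{c_N}{2}\iint_{\Omega\times\Omega,\,|x-y|\ge1}\tfrac{(u(x)-u(y))^2}{|x-y|^N}$. Expanding the square in the last integral, the symmetry $x\leftrightarrow y$ turns the $u(x)^2+u(y)^2$ terms into $c_N\int_\Omega u^2(x)\big(\int_{\Omega\setminus B_1(x)}|x-y|^{-N}\,dy\big)\,dx$ preceded by a minus sign — precisely the negative part of $\int_\Omega h_\Omega u^2$ — while the cross term $-2u(x)u(y)$ produces $+\,c_N\iint_{\Omega\times\Omega,\,|x-y|\ge1}|x-y|^{-N}u(x)u(y)\,dx\,dy$. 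Since $u$ vanishes off $\Omega$, the second term of \eqref{def EL} equals $-\iint_{\Omega\times\Omega}u(x)u(y)j(x-y)=-c_N\iint_{\Omega\times\Omega,\,|x-y|\ge1}|x-y|^{-N}u(x)u(y)$, which cancels that cross term exactly; finally $\rho_N\int_{\R^N}u^2=\rho_N\int_\Omega u^2$. Collecting the surviving pieces gives \eqref{EL_alternative}.

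The algebra above is elementary; the point that requires care — and the one I expect to be the main obstacle — is to justify that every partial integral produced along the way is absolutely convergent, so that the domain splittings, the applications of Tonelli's theorem, and the final cancellation are all legitimate, and that in particular $\int_\Omega|h_\Omega|u^2<\infty$. Nothing deep is needed here: the hypersingular part near the diagonal is controlled by $\iint_{\Omega\times\Omega}(u(x)-u(y))^2k(x-y)\le2\E(u,u)<\infty$ for $u\in\HH(\Omega)$; the positive part of $\int_\Omega h_\Omega u^2$ is finite because it equals twice the block of $\E(u,u)$ supported on $\Omega\times\Omega^c$, a restriction of a nonnegative integrand of finite total mass; every integral restricted to $\{|x-y|\ge1\}$ is finite because there $|x-y|^{-N}\le1$, $\Omega$ is bounded and $u\in L^2(\Omega)$ — for instance $c_N\int_\Omega u^2(x)\big(\int_{\Omega\setminus B_1(x)}|x-y|^{-N}\,dy\big)\,dx\le c_N|\Omega|\,\|u\|_{L^2(\Omega)}^2$, and the bilinear far piece is handled the same way via Cauchy--Schwarz; and the $j$-convolution is controlled through $j\in L^r(\R^N)$ for every $r>1$ together with the boundedness of $\Omega$, exactly as recalled before the definition of weak supersolution. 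Pointwise, $h_\Omega$ is finite at every $x\in\Omega$, since $\Omega$ open keeps $B_1(x)\setminus\Omega$ at positive distance from $x$ (so the first integral in the definition of $h_\Omega$ is at most a logarithmic quantity), while the second integral is at most $|\Omega|$. Once these finiteness facts are in place, the identity follows from the bookkeeping described above.
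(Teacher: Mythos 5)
Your argument is correct and is the expected derivation: split $\mathcal{E}(u,u)$ over the four blocks of $\R^N\times\R^N$ cut out by $\Omega$ and $\Omega^c$ (the $\Omega^c\times\Omega^c$ block vanishes, the two mixed blocks give the positive part of $\int_\Omega h_\Omega u^2$ since $u=0$ off $\Omega$), rewrite the $\Omega\times\Omega$ block by replacing $k$ with the full kernel $c_N|x-y|^{-N}$ and subtracting the far part, expand the square in the far part so that the diagonal $u^2$ terms yield the negative part of $\int_\Omega h_\Omega u^2$ and the cross term exactly cancels the $j$-convolution, and add the trivial $\rho_N$ term. Your finiteness checks (near-diagonal part bounded by $\mathcal{E}(u,u)$, all far parts bounded using $|x-y|^{-N}\le 1$, boundedness of $\Omega$, and $u\in L^2$, the positive part of $\int_\Omega h_\Omega u^2$ identified with a nonnegative sub-block of $\mathcal{E}(u,u)$) are precisely what is needed to legitimize the regrouping via Tonelli. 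The present paper does not reproduce a proof of this proposition, merely citing \cite[Proposition 3.2]{chen2019dirichlet}; your derivation is the standard one underlying that result. One small slip: the sentence in your opening outline saying the $\Omega\times\Omega^c$ piece helps rebuild the untruncated Gagliardo integral is misleading---that piece in fact produces the positive part of $\int_\Omega h_\Omega u^2$---but your detailed second paragraph handles it correctly, so this has no effect on the argument.
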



Eigenvalues and eigenfunctions are then defined in a natural way, see \cite[Section 3]{chen2019dirichlet}. The results in \cite[Section 3]{chen2019dirichlet} are stated for bounded \emph{domains} of $\R^N$, but most of them hold true also in general bounded open sets. We discuss this point in the appendix of the present paper. In particular, for any bounded open set $\Omega \subset \R^N$, as in \cite[Theorem 3.4]{chen2019dirichlet} we have that the operator $L_\Delta$ in $\HH(\Omega)$ has an increasing sequence of eigenvalues $\lambda_k^L(\Omega)$ tending to $+\infty$, whose associated eigenfunctions form an orthonormal basis of $L^2(\Omega)$. The first eigenvalue $\lambda_1^L(\Omega)$ is characterized as
\[
\lambda_1^L(\Omega):= \inf\left\{ \E_L(u,u): \ u \in \HH(\Omega) \quad \text{and} \quad \|u\|_{L^2(\Omega)}=1\right\}.
\]
It is useful to have bounds on $\lambda_1^L(\Omega)$, since its strict positivity is equivalent to the validity of a maximum principle \cite[Theorem 4.8]{chen2019dirichlet}. It is remarkable that, differently to what happens for the Laplacian and for the fractional Laplacian, such strict positivity is false on general open bounded sets, see \cite[Corollary 4.9]{chen2019dirichlet}. On the other hand, \cite[Corollary 4.12]{chen2019dirichlet} implies that $\lambda_1^L(\Omega)>0$ on open bounded sets $\Omega$ with small measure. In fact, by combining some results proved in \cite{chen2019dirichlet}, it is also possible to obtain a quantitative lower bound on $\lambda_1^L(\Omega)$ in some cases. This is the content of the following remark.

\begin{remark}
\label{lowerboundlambdaone}
Let $\Omega$ be open and bounded, and let $B_r$ be the ball with the same volume of $\Omega$. By \cite[Lemma 4.11]{chen2019dirichlet}, we have that
\begin{equation}
h_{B_r}(x) \ge 2 \, \log \frac{1}{r} \qquad \forall x \in B_r,
\end{equation}
and therefore, denoting by $u_1$ the $L^2$-normalized eigenfunction corresponding to $\lambda_1^L(B_r)$, and using \eqref{EL_alternative}, a simple computation yields
\begin{equation}
\begin{split}
\lambda_1^L(B_r) &= \lambda_1^L(B_r) \, \| u_1 \|_{L^2(B_r)}^2 = \E_L(u_1,u_1) \\
&= \frac{c_N}{2} \iint_{B_r \times B_r} \frac{(u_1(x) - u_1(y))^2}{|x-y|^N} \, dxdy + \int_{B_r} (h_{B_r} (x) + \rho_N) u_1^2(x) \, dx\\
&\ge \left( 2 \, \log \frac{1}{r} - |\rho_N| \right) \, \|u_1\|^2_{L^2(B_r)} = \left( 2 \, \log \frac{1}{r} - |\rho_N| \right).
\end{split}
\end{equation}
As a consequence, by \cite[Corollary 3.6]{chen2019dirichlet} (which holds true for open sets, not necessarily connected, see the appendix), we deduce that
\[
\lambda_1^L(\Omega) \ge \lambda_1^L(B_r) \ge 2 \, \log \frac{1}{r} - |\rho_N|.
\]
This quantity diverges to $+\infty$ as $r \to 0^+$, namely as the measure of $\Omega$ tends to $0^+$.
\end{remark}

\section{Antisymmetric Weak Maximum Principles and Hopf-type Lemmas}
\label{section_2}

Due to the nonlocal nature of the Logarithmic Laplacian, in order to apply a direct method of the moving planes it is necessary to prove maximum principles and Hopf-type lemmas for antisymmetric functions. This is the content of this section. A major inspiration for the results of this section is \cite{fall2015overdetermined}, we also refer to \cite{BaScMo, chen2017direct, fall2015overdetermined}. 

Let $H \subset \R^N$ be a half-space, $T=\partial H$ its boundary, and $\Q: \R^N \to \R^N$ the map $x \mapsto \bar{x}$, where $\bar{x}$ is the reflection with respect to $T$. A measurable function $u : \R^N \to \R$ is \emph{antisymmetric} with respect to $T$ if $u(\bar x) = - u(x)$ for a.e. $x \in \R^N$.

\begin{proposition}[Antisymmetric weak maximum principle]
\label{lem: wmp}
Let $\Omega \subset H$ be a bounded open set with Lipschitz boundary, and let $u \in \HH(\R^N)$ be an antisymmetric function with respect to $T$, such that
\begin{equation}
\begin{cases}
    L_\Delta u \ge V(x) u \quad &\textrm{in} \ \Omega,\\
u \ge 0 \quad &\textrm{in} \ H \setminus \Omega
\end{cases}
\end{equation}
in a weak sense, where $V \in L^\infty(\Omega)$ is such that $\|V\|_{L^\infty(\Omega)} < \lambda^L_1(\Omega)$. Then $\varphi:= u_- \chi_H \in \HH(\Omega)$, and $u \ge 0$ a.e. in $\Omega$.
%
%
\end{proposition}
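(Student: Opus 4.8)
The plan is to use $\varphi=u_-\chi_H$ itself as a test function in the weak supersolution inequality and to combine this with an algebraic identity for $\E_L$ which, exploiting the antisymmetry of $u$, forces $\varphi$ to vanish. A preliminary remark: since $u\ge0$ a.e.\ in $H\setminus\Omega$ and $\Omega\subset H$, we have $u_-=0$ a.e.\ in $\R^N\setminus\Omega$, hence $\varphi=u_-\chi_H=u_-\chi_\Omega$ a.e.; in particular $\varphi$ vanishes a.e.\ outside $\Omega$, so proving $\varphi\in\HH(\Omega)$ reduces to proving $\E(\varphi,\varphi)<\infty$. Set $g:=u+\varphi$; then $g=u_+$ on $H$ (because $u+u_-=u_+$, where $u_+:=\max\{u,0\}$) and $g=u$ on $H^c$. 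Thus the proof splits into: (i) the identity $\E_L(g,\varphi)\le0$; (ii) the admissibility $\varphi\in\HH(\Omega)$; (iii) the final contradiction.

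\emph{The identity.} Expand $\E_L(g,\varphi)=\E(g,\varphi)-\iint_{\R^N\times\R^N}g(x)\varphi(y)\,j(x-y)\,dxdy+\rho_N\int_{\R^N}g\varphi\,dx$ according to \eqref{def EL}. The $L^2$ term vanishes because $g\varphi=u_+u_-=0$ on $\Omega$ and $\varphi\equiv0$ off $\Omega$. In the remaining two terms, split each variable according to membership in $H$ or $H^c$; in every block containing $H^c$ apply the change of variables $y\mapsto\bar y$, which is an isometry (so $k$ and $j$ are invariant and $|x-\bar y|=|\bar x-y|$) and satisfies $u(\bar y)=-u(y)$, and simplify repeatedly using $u_+u_-=0$. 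All resulting integrals over $H\times H$ are absolutely convergent: for example $\iint_{H\times H}u_+(x)u_-(y)k(x-y)\,dxdy=-\tfrac12\iint_{H\times H}(u_+(x)-u_+(y))(u_-(x)-u_-(y))k(x-y)\,dxdy$, which is finite by Cauchy--Schwarz for $\E$ since $u_\pm\in\HH(\R^N)$, and similarly for the others. Since $k(z)+j(z)=c_N|z|^{-N}$ for $z\ne0$ by \eqref{def k e j}, the $k$- and $j$-contributions recombine into
\[
\E_L(g,\varphi)=-c_N\iint_{H\times H}u_+(x)\,u_-(y)\left(\frac1{|x-y|^N}-\frac1{|x-\bar y|^N}\right)dxdy-c_N\iint_{H\times H}\frac{u_-(x)\,u_-(y)}{|x-\bar y|^N}\,dxdy.
\]
Both terms are $\le0$: the second obviously, and in the first the parenthesis is $\ge0$ because reflecting $y\in H$ across $T$ sends it to a point farther from $x\in H$, so $|x-y|\le|x-\bar y|$, while $u_\pm\ge0$.

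\emph{Conclusion, granting $\varphi\in\HH(\Omega)$.} Testing the supersolution inequality against $\varphi\ge0$ and using $u\varphi=-\varphi^2$ a.e.\ in $\Omega$ (indeed $u_-=-u$ where $u<0$, and $\varphi=0$ elsewhere) gives $\E_L(u,\varphi)\ge\int_\Omega Vu\varphi\,dx=-\int_\Omega V\varphi^2\,dx\ge-\|V\|_{L^\infty(\Omega)}\|\varphi\|_{L^2(\Omega)}^2$. On the other hand, by bilinearity and the identity, $\E_L(u,\varphi)=\E_L(g,\varphi)-\E_L(\varphi,\varphi)\le-\E_L(\varphi,\varphi)\le-\lambda_1^L(\Omega)\|\varphi\|_{L^2(\Omega)}^2$, the last step by the variational characterization of $\lambda_1^L(\Omega)$ (recall $\varphi$ vanishes outside $\Omega$). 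Comparing the two bounds yields $\lambda_1^L(\Omega)\|\varphi\|_{L^2(\Omega)}^2\le\|V\|_{L^\infty(\Omega)}\|\varphi\|_{L^2(\Omega)}^2$, which forces $\varphi=0$ since $\|V\|_{L^\infty(\Omega)}<\lambda_1^L(\Omega)$; that is, $u\ge0$ a.e.\ in $\Omega$.

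\emph{Admissibility of $\varphi$ — the main obstacle.} Because the kernel $k$ is of zero order, the ``boundary'' contribution $\iint_{H\times H^c}u_-(x)^2k(x-y)\,dxdy$ is controllable only by a constant times $\int_\Omega u_-(x)^2\log_+\!\big(1/\dist(x,T)\big)\,dx$, which is \emph{not} bounded by $\|u\|_{\HH(\R^N)}$ alone, so the needed integrability must be extracted from the equation. I would argue by truncation: for $n\in\N$ set $\varphi_n:=\min\{u_-,n\}\chi_\Omega$, which is bounded, vanishes outside $\Omega$, and has $\E(\varphi_n,\varphi_n)<\infty$ (the boundary term being at most $Cn^2\int_\Omega\log_+(1/\dist(x,T))\,dx<\infty$, as $T$ is a hyperplane and $\Omega$ is bounded), so $\varphi_n\in\HH(\Omega)$. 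Repeating the previous steps with $\varphi_n$ — the sign analysis of the identity uses only $0\le\varphi_n\le u_-$ and $u_+u_-=0$, and the weak inequality gives $\E_L(u,\varphi_n)\ge-\|V\|_{L^\infty(\Omega)}\|u_-\|_{L^2(\Omega)}^2$ — should produce a bound $\E_L(\varphi_n,\varphi_n)\le C$ uniform in $n$; combined with \eqref{EL_alternative} and $h_\Omega\ge-c_N|\Omega|$, this gives $\sup_n\E(\varphi_n,\varphi_n)<\infty$. Since $\varphi_n\uparrow\varphi$ and the increments $(\varphi_n(x)-\varphi_n(y))^2$ increase in $n$, monotone convergence yields $\E(\varphi,\varphi)=\sup_n\E(\varphi_n,\varphi_n)<\infty$, whence $\varphi\in\HH(\R^N)$ and so $\varphi\in\HH(\Omega)$. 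The subtle point here, where I expect the real work to lie, is the uniform bound on $\E_L(\varphi_n,\varphi_n)$: the clean identity of step (i) was special to the exact choice $\varphi=u_-\chi_H$, so for the truncations one needs a more careful sign bookkeeping (or, alternatively, to invoke a separate lemma that truncations of $\HH(\R^N)$-functions supported in a Lipschitz set lie in $\HH(\Omega)$).
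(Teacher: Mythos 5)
Your core argument is a correct repackaging of the paper's proof, not a genuinely different route. The paper works from the algebraic identity $(u(x)-u(y))(\varphi(x)-\varphi(y))+(\varphi(x)-\varphi(y))^2=-\varphi(x)(u+\varphi)(y)-\varphi(y)(u+\varphi)(x)$, estimates $\E(u,\varphi)$, the $j$-term and the $\rho_N$-term separately, and then recombines via $k+j=c_N|z|^{-N}$; your decomposition $\E_L(u,\varphi)=\E_L(g,\varphi)-\E_L(\varphi,\varphi)$ with $g=u+\varphi$, together with the $H\times H$ reduction of $\E_L(g,\varphi)$, is the same computation in cleaner notation, and the final comparison with $\lambda_1^L(\Omega)$ is identical.

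Where you add value is in taking seriously the assertion $\varphi\in\HH(\Omega)$: the paper dismisses it with ``it is easy to see,'' but for the zero-order kernel this is genuinely delicate whenever $\overline\Omega$ meets $T$ (which is precisely the situation in the moving-plane applications), since $\iint_{\Omega\times\Omega^c}u_-^2(x)k(x-y)\,dxdy$ carries a logarithmic Hardy weight that $\E(u_-,u_-)$ alone does not control. Your truncation strategy is the right idea, but you leave a genuine gap exactly where you say you expect the work to lie: you do not establish the uniform bound on $\E_L(\varphi_n,\varphi_n)$. That bound does go through, but it requires redoing the sign analysis for $g_n:=u+\varphi_n$, which on $H$ equals $u_+-(u_--n)_+\chi_\Omega$ and is no longer nonnegative. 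Carrying the $H\times H$ decomposition through, one finds every contribution to $\E_L(g_n,\varphi_n)$ is still nonpositive except a single term of the form $\iint_{\Omega\times\Omega}(u_--n)_+(x)\,\varphi_n(y)\,j(x-y)\,dxdy$, which is bounded by $c_N\|u_-\|_{L^1(\Omega)}^2$ since $j\le c_N$; combining this with the test inequality $\E_L(u,\varphi_n)\ge-\|V\|_{L^\infty}\|u_-\|_{L^2(\Omega)}^2$, with \eqref{EL_alternative}, and with the elementary bound $h_\Omega\ge-c_N|\Omega|$ yields $\sup_n\E(\varphi_n,\varphi_n)<\infty$, and monotone convergence then gives $\varphi\in\HH(\Omega)$. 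So the step you flag as ``the real work'' is real and must be done, not merely noted; your proposal as written leaves it open. Otherwise the argument is sound and matches the paper's.
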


\begin{remark}
It is implicitly assumed that $\lambda^L_1(\Omega)>0$.
\end{remark}

\begin{proof}
It is easy to see that $\varphi\in \HH (\Omega)$. We are then able to compute $\E_L(u, \varphi)$ and, since $u$ is a supersolution, we obtain
\begin{equation}
\label{PS1s3eq21}
   \int_{\Omega} V u \varphi \, dx  \le  \E(u,\varphi) -  \iint_{\R^N \times \R^N} u(x) \varphi(y) j(x-y) \, dx dy  + \rho_N \int_{\R^N} u \varphi \, dx.
\end{equation}
Now we look for estimates of each summand in the right-hand side of \eqref{PS1s3eq21}. Concerning the latter term, we immediately see that 
\begin{equation}\label{t3neg}
\rho_N \int_{\R^N} u \varphi \, dx = - \rho_N \int_{\R^N} \varphi^2 \, dx \le 0.
\end{equation}

To handle the first term in \eqref{PS1s3eq21}, we make use of an identity first appeared in the proof of \cite[Proposition 3.1]{fall2015overdetermined}. For a.e. $x,y \in \R^N$ we have
\begin{equation}
\begin{split}
(u(x) - u(y))(\varphi(x) - \varphi(y)) &+ (\varphi(x) - \varphi(y))^2 = - \varphi(x) (\varphi(y) + u(y)) - \varphi(y) (\varphi(x) + u(x)).
\end{split}
\end{equation}
Therefore, using the antisymmetry of $u$, we deduce that
\begin{equation}
\label{PS1s3eq26}
\begin{split}
\E(u,\varphi) &= - \E(\varphi, \varphi) -  \int_{\R^N} \varphi(y) \int_{\R^N} (\varphi (x) + u(x)) k(x - y) \, dx \, dy \\
&= - \E(\varphi, \varphi) -  \int_{\R^N} \varphi(y) \int_{\R^N} (\varphi (x) + u(x)) \left( \chi_H(x) + \chi_{H^c}(x) \right) k(x - y) \, dx \, dy\\
&= - \E(\varphi, \varphi) -  \int_{\R^N} \varphi(y) \left( \int_{H} u_+(x) k(x - y) \, dx - \int_{H} u(x) k(\bar{x} - y) \, dx \right) dy\\
&\le - \E(\varphi, \varphi) -  \int_{\R^N} \varphi(y) \int_{H} u_+(x) \left(  k(x - y) - k(\bar{x} - y) \right) \, dx \, dy.
\end{split}
\end{equation}

Finally, to estimate the second term in \eqref{PS1s3eq21}, we observe that
\[
u \ge u_+ \chi_H - u_- \chi_H - u_- \chi_{H^c},
\]
and that by antisymmetry $u_-(\bar{x}) = u_+(x)$ for every $x \in H$. Therefore
\begin{equation}
\label{PS1s3eq27}
\begin{split}
- \iint_{\R^N \times \R^N} u(x)\varphi(y) j(x-y) \, dx dy  \le  \iint_{\R^N \times \R^N} \varphi(x) \varphi(y) j(x-y) \, dx dy& \\
 - \int_{\R^N} \varphi(y) \int_{H} u_+(x) \left(  j(x - y) - j(\bar{x} - y) \right) \, dx \, dy&.
\end{split} 
\end{equation} 

We are now ready to come back to \eqref{PS1s3eq21}: by \eqref{t3neg}, \eqref{PS1s3eq26} and \eqref{PS1s3eq27}, and observing that
\begin{multline}
\label{PS1s3eq31}
 \int_{\R^N} \varphi(y) \int_{H} u_+(x) \left(  k(x - y) + j(x - y) - k(\bar{x} - y) - j(\bar{x} - y) \right) \, dx \, dy \\
= c_N \int_{H} \varphi(y) \int_{H} u_+(x) \left( \frac{1}{|x-y|^N} - \frac{1}{|\bar{x}-y|^N} \right) dx \, dy \ge 0
\end{multline}
(since $|x-y| \le |\bar x-y|$ for every $x, y \in H$), from \eqref{PS1s3eq21} we obtain
\begin{equation}
\begin{split}
-\int_{\Omega} V \varphi^2 \,dx & \le -\E(\varphi,\varphi) + c_N \iint_{\R^N \times \R^N} \varphi(x) \varphi(y) j(x-y) \, dx dy - \rho_N \int_{\R^N} \varphi^2 \, dx \\
&= - \E_L(\varphi, \varphi).
\end{split}
\end{equation}
Recalling the definition of the first eigenvalue $\lambda_1^L(\Omega)$, this implies that 
\[
(\lambda_1^L(\Omega)  - \|V\|_{L^\infty(\Omega)}) \|\varphi\|_{L^2(\Omega)}^2 \le 0,
\]
which, in view of our assumption on $V$, gives the desired result.
\end{proof}

It is convenient to state a direct consequence of Proposition \ref{lem: wmp} and Remark \ref{lowerboundlambdaone}.

\begin{corollary}[Antisymmetric maximum principle in sets with small measure]
\label{cor: awmp small domains}
Let $V_\infty \ge 0$. There exists $\delta>0$ depending only on $V_\infty$ and on the dimension $N$ such that, if $\Omega \subset H$ is a bounded open set of $\R^N$ with Lipschitz boundary, if $|\Omega|<\delta$, and if $u \in \HH(\R^N)$ is an antisymmetric function with respect to $T$ such that
\begin{equation}
\begin{cases}
    L_\Delta u \ge V(x) u \quad &\textrm{in} \ \Omega,\\
u \ge 0 \quad &\textrm{in} \ H \setminus \Omega,
\end{cases}
\end{equation}
where $V \in L^\infty(\Omega)$ and $\|V\|_{L^\infty(\Omega)} \le V_\infty$, then $u \ge 0$ a.e. in $\Omega$.
\end{corollary}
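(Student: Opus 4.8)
The plan is to obtain the statement as a direct quantitative consequence of the antisymmetric weak maximum principle (Proposition~\ref{lem: wmp}) together with the lower bound on the first eigenvalue recorded in Remark~\ref{lowerboundlambdaone}. The key observation is that the only genuine hypothesis of Proposition~\ref{lem: wmp}, namely $\|V\|_{L^\infty(\Omega)} < \lambda^L_1(\Omega)$ (which also forces $\lambda^L_1(\Omega)>0$), becomes automatic once $|\Omega|$ is taken small enough, because $\lambda^L_1(\Omega) \to +\infty$ as $|\Omega| \to 0^+$ at a rate that is explicit and dimensional.

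Concretely, I would first let $B_r$ be the ball with $|B_r| = |\Omega|$, so that $r = (|\Omega|/\omega_N)^{1/N}$ with $\omega_N := |B_1|$, and invoke Remark~\ref{lowerboundlambdaone} to get
\[
\lambda^L_1(\Omega) \;\ge\; 2\log\frac1r - |\rho_N| \;=\; -\frac{2}{N}\log\frac{|\Omega|}{\omega_N} - |\rho_N|.
\]
Next I would fix the threshold
\[
\delta \;:=\; \omega_N\,\exp\!\left(-\frac{N}{2}\bigl(V_\infty + |\rho_N| + 1\bigr)\right),
\]
which depends only on $V_\infty$ and $N$ (since $\omega_N$ and $\rho_N$ are dimensional constants). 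If $|\Omega| < \delta$, then plugging this bound into the inequality above yields $\lambda^L_1(\Omega) > V_\infty + 1 > V_\infty \ge \|V\|_{L^\infty(\Omega)}$; in particular $\lambda^L_1(\Omega) > 0$ and the smallness assumption $\|V\|_{L^\infty(\Omega)} < \lambda^L_1(\Omega)$ of Proposition~\ref{lem: wmp} is verified.

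Finally, with all hypotheses of Proposition~\ref{lem: wmp} in force, I would simply apply it to conclude that $u \ge 0$ a.e.\ in $\Omega$. I do not expect any real obstacle here: the argument is purely a matter of chaining the two preceding results. The only points requiring a bit of care are that the ball in Remark~\ref{lowerboundlambdaone} must be chosen with the same measure as $\Omega$ (so that $r$ is controlled by $|\Omega|$), and that one should double-check that the resulting $\delta$ depends only on the allowed data $V_\infty$ and $N$, and not on $\Omega$, $V$ or $u$ themselves — which is clear from the explicit formula.
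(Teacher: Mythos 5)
Your proposal is correct and takes exactly the same route as the paper: apply Proposition~\ref{lem: wmp} after using Remark~\ref{lowerboundlambdaone} to ensure $\lambda^L_1(\Omega) > V_\infty$ once $|\Omega|$ is small. You simply make the threshold $\delta$ explicit, which the paper leaves implicit.
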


\begin{proof}
By Proposition \ref{lem: wmp}, it is sufficient to check that $\lambda_1^L(\Omega) > V_\infty$ provided that $|\Omega|$ is small enough. This is ensured by Remark \ref{lowerboundlambdaone}.
\end{proof}

\begin{lemma}[Antisymmetric Hopf's lemma for $L_\Delta$]
\label{antisymmetric_Hopf_logarithmic_Laplacian}
Let $\Omega \subset H$ be a bounded open set with Lipschitz boundary, let $B \subset \Omega$ be a ball such that $B \ssubset H$ and $\lambda_1^L(B)>0$, and let $u \in \HH(\R^N)$ be an antisymmetric function with respect to $T$ such that
\begin{equation}
\begin{cases}
    L_\Delta u \ge V(x) u  \quad &\textrm{in} \ \Omega,\\
u \ge 0 \quad &\textrm{in} \ H,
\end{cases}
\end{equation}
where $V \in L^\infty(\Omega)$ is such that $\|V\|_{L^\infty(\Omega)} \le \lambda^L_1(B)$.
Let also $B' \ssubset H \setminus \overline{B}$ be another ball, such that $\essinf_{B'} u > 0$. Then, there exists a constant $C > 0$ depending on $N$, $B$, $B'$, $\essinf_{B'} u$ and $\| V \|_{L^\infty(\Omega)}$, such that
\begin{equation}
\label{PS1s3eq42}
u(x) \ge C \, \ell^{1/2} (\dist (x,\partial B)) \quad \textrm{for a.e.} \ x \in B,
\end{equation}
where
\beq\label{def ell}
\ell(r):= \frac{1}{|\log(\min\{r,0.1\})|}.
\eeq
Moreover, if $u\in C(\overline{B})$ and $u(x_0) = 0$ for some $ x_0 \in \partial B \cap \partial \Omega$, then we have
\begin{equation}
\liminf_{t \searrow 0} \frac{u(x_0 - t \nu(x_0)) }{\ell^{1/2}(t)} > 0,
\end{equation}
where $\nu$ is the outer unit normal vector field along $\partial B$.
\end{lemma}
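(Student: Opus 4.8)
The strategy is to build an explicit subsolution (a barrier) supported in $B$, compare $u$ with it via the antisymmetric weak maximum principle (Proposition \ref{lem: wmp}), and then track the boundary behaviour of the barrier. First I would construct a function $\psi$ supported in the ball $B$ whose logarithmic Laplacian can be bounded below pointwise by something like $-\|V\|_{L^\infty}\psi$ plus a controlled negative error, so that $\varepsilon\psi$ is a weak subsolution of $L_\Delta\varphi = V\varphi$ in $B$ for $\varepsilon$ small; the natural candidate, in view of \eqref{def ell} and the zero-order nature of the kernel, is $\psi(x) = \ell^{1/2}(\dist(x,\partial B))$ (suitably extended by $0$ outside $B$), since the logarithmic singularity in $k$ pairs with the $|\log|^{-1/2}$ profile in the same way that $\operatorname{dist}^{s}$ pairs with $(-\Delta)^s$. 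The contribution of the far ball $B'$ enters through the negative term $-c_N\int_{\R^N\setminus B_1(x)} u(y)|x-y|^{-N}\,dy$ appearing in \eqref{pointwise L D}: since $\essinf_{B'} u > 0$ and $B'\ssubset H$, this produces a strictly positive contribution to $L_\Delta u$ on $B$ — but one must be careful, because for the Logarithmic Laplacian distant positive mass \emph{decreases} the value of the operator. The correct mechanism is antisymmetric: I would pair $B'\subset H$ with its reflection $\overline{B'}\subset H^c$ where $u\le 0$, so that the combined contribution $u_+\chi_H$ against the antisymmetric kernel $k(x-y)+j(x-y)-k(\bar x-y)-j(\bar x-y)$ is nonnegative, exactly as in \eqref{PS1s3eq31}; this gives a quantitative lower bound $L_\Delta u \ge Vu + \kappa$ in $B$ for an explicit constant $\kappa>0$ depending on $\essinf_{B'}u$, $B$, $B'$, $N$.

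Next, with this $\kappa$ in hand, I would let $w := \varepsilon\psi$ where $\varepsilon>0$ is chosen small enough that $L_\Delta w \le Vw + \kappa$ in $B$ weakly — here one uses $\|V\|_{L^\infty(\Omega)}\le\lambda_1^L(B)$ together with the alternative representation \eqref{EL_alternative} on $B$ to absorb the zero-order error term in $L_\Delta\psi$ into $\lambda_1^L(B)\|\psi\|_{L^2(B)}^2$, so that $(L_\Delta-V)(\varepsilon\psi)$ stays below $\kappa$. Then $u - w$ satisfies $L_\Delta(u-w)\ge V(u-w)$ in $B$ and $u-w = u \ge 0$ in $H\setminus B$ (since $\psi$ vanishes there), so the antisymmetric weak maximum principle Proposition \ref{lem: wmp} applied on $B$ (legitimate because $\|V\|_{L^\infty(\Omega)}\le\lambda_1^L(B)$, and one can absorb the equality case by a standard $\eta$-perturbation of $\lambda_1$ or by noting the strict inequality $\kappa>0$ gives room) yields $u \ge w = \varepsilon\ell^{1/2}(\dist(\cdot,\partial B))$ a.e. in $B$, which is \eqref{PS1s3eq42} with $C=\varepsilon$. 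Finally, the boundary statement: if $u\in C(\overline B)$ and $u(x_0)=0$ at $x_0\in\partial B\cap\partial\Omega$, then along the inner normal segment $x_0 - t\nu(x_0)$ one has $\dist(x_0-t\nu(x_0),\partial B) \ge c\,t$ for small $t$ (interior ball at a smooth boundary point of $B$), so $u(x_0-t\nu(x_0))\ge C\ell^{1/2}(ct)$, and since $\ell^{1/2}(ct)/\ell^{1/2}(t)\to 1$ as $t\to 0^+$, we get $\liminf_{t\searrow 0} u(x_0-t\nu(x_0))/\ell^{1/2}(t) \ge C > 0$.

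The main obstacle I expect is the explicit barrier computation: showing that $\psi(x)=\ell^{1/2}(\dist(x,\partial B))$ is an approximate subsolution, i.e.\ estimating $L_\Delta\psi$ from above near $\partial B$ with the right $\ell^{1/2}$ scaling. This requires splitting the singular integral $c_N\int_{B_1(x)}(\psi(x)-\psi(y))|x-y|^{-N}\,dy$ into a region close to $x$ (where a Dini-type modulus-of-continuity estimate on $r\mapsto\ell^{1/2}(r)$ controls the integrand, using that $\ell^{1/2}$ is only slightly worse than Lipschitz) and a region away from $x$, and verifying that the boundary layer of $B$ does not spoil the bound — essentially the logarithmic analogue of the computation that $\operatorname{dist}^{s}$ has bounded fractional Laplacian. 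One must also handle the sign subtlety in the far-field term of $L_\Delta\psi$: since $\psi\ge 0$, the term $-c_N\int_{\R^N\setminus B_1(x)}\psi(y)|x-y|^{-N}\,dy$ is negative, which actually \emph{helps} make $\psi$ a subsolution, so it can be discarded, but the term on $B_1(x)\setminus B$ where $\psi=0$ must be controlled carefully since it is the dominant contribution shaping the $\ell^{1/2}$ profile. A clean way to organise this is to quote or adapt the barrier estimates already developed for $L_\Delta$ in \cite{hernandez2024optimal} (whose Hopf lemma for positive supersolutions uses precisely such a $\ell^{1/2}$ barrier), thereby reducing the novelty here to the antisymmetric bookkeeping and the far-ball lower bound $\kappa$.
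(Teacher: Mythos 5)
Your overall strategy (barrier plus antisymmetric weak maximum principle, with the boundary profile $\ell^{1/2}(\dist)$ pulled from \cite{hernandez2024optimal}) is the right one, but the mechanism by which the far ball $B'$ enters is wrong, and the error is a sign error that cannot be patched. You claim that the antisymmetric pairing of $B'$ with $\Q(B')$ yields a quantitative lower bound $L_\Delta u \ge Vu + \kappa$ in $B$. It does not: for $x\in B\subset H$ and $y\in B'\subset H$ the paired far-field contribution to the pointwise expression \eqref{pointwise L D} is $-c_N\int_{B'} u(y)\bigl(\tfrac{1}{|x-y|^N}-\tfrac{1}{|x-\bar y|^N}\bigr)\,dy$, and since $|x-y|\le|x-\bar y|$ on $H\times H$ and $u>0$ on $B'$, this term is \emph{nonpositive}. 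So distant antisymmetric positive mass lowers $L_\Delta u$ on $B$ rather than raising it, and there is no ``$\kappa$-room'' in the supersolution inequality to absorb an error in an approximate barrier. (The inequality \eqref{PS1s3eq31} you cite has the right sign precisely because in the proof of Proposition \ref{lem: wmp} that term appears subtracted; reading it as a pointwise improvement of $L_\Delta u$ reverses the sign.) A second, independent issue: the barrier $\varepsilon\psi$ with $\psi=\ell^{1/2}(\dist(\cdot,\partial B))\chi_B$ is not antisymmetric, so $u-\varepsilon\psi$ is not antisymmetric and Proposition \ref{lem: wmp} does not apply to it; you would at least need to antisymmetrize, and the reflected negative bump on $\Q(B)$ then adds a \emph{positive} far-field contribution to $L_\Delta$ of the barrier on $B$, making the subsolution inequality harder, not easier.

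The paper resolves both difficulties by putting the far-field mass \emph{into the barrier}, not into $u$. It takes the antisymmetric comparison function $w=\psi_B-\psi_{\Q(B)}+\alpha\eta-\alpha\tilde\eta$, where $\psi_B$ is the log-torsion function of $B$ (so $L_\Delta\psi_B=1$ in $B$, no pointwise estimate of $L_\Delta$ of $\ell^{1/2}(\dist)$ is ever needed — one only quotes the two-sided bound $\psi_B\asymp\ell^{1/2}(\dist(\cdot,\partial B))$ from \cite[Theorem 1.2]{hernandez2024optimal}), and $\eta\in C_c^\infty(B')$. Testing $\E_L(w,\phi)$ against $\phi\ge 0$ in $\HH(B)$, the pair $\alpha(\eta-\tilde\eta)$ contributes a strictly negative term $-\alpha C_1\int_B\phi$ (with $C_1>0$ exactly because the kernel difference $\tfrac{1}{|x-y|^N}-\tfrac{1}{|\bar x-y|^N}$ is bounded away from $0$ on $B'\times B$), so taking $\alpha$ large makes $w$ a genuine weak subsolution of $L_\Delta w\le Vw$ in $B$ — no approximation needed. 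One then sets $\tau=\essinf_{B'}u/\alpha$ so that $u-\tau w\ge 0$ on $H\setminus B$, and applies Proposition \ref{lem: wmp} (using $\|V\|_\infty\le\lambda_1^L(B)$) to conclude $u\ge\tau\psi_B$ in $B$. Thus $\essinf_{B'}u$ enters as the scaling of the barrier's far bump, not as an enhancement of the supersolution property of $u$; that is the key idea your plan is missing.
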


\begin{remark}
As already pointed out, the assumption $\lambda_1^L(B)>0$ is satisfied whenever $B$ is sufficiently small.\end{remark}

\begin{proof}
Since $\lambda_1^L(B)>0$, it is well defined the log-torsion function $\psi_B$ in the ball $B$, that is
\begin{equation}
\begin{cases}
L_\Delta \psi_B = 1 \quad &\textrm{in} \ B,\\
\psi_B = 0 \quad &\textrm{in} \ \R^N \setminus B.
\end{cases}
\end{equation}
Let also $\eta \in C^\infty_c(B')$ be nonnegative, with $\max_{B'} \eta=1$ and $\int_{B'} \eta\,dx = \sigma>0$, and let $\tilde \eta(x):=\eta(\bar x)$. Notice that it is possible to choose $\eta$ in such a way that $\sigma=|B'|/2$. We introduce the antisymmetric function
\begin{equation}
w:= \psi_B - \psi_{\Q(B)} + \alpha \eta - \alpha \tilde \eta,
\end{equation}
where $\alpha > 0$ is a constant which will be chosen later on in the proof. Clearly $w \in \HH(\R^N)$; our goal is to show that 
\begin{equation}
\E_L (w, \phi) \le \int_{B} V w \phi \, dx \quad \textrm{for every nonnegative} \ \phi \in \HH(B).
\end{equation}
Given such a $\phi$, since $L_\Delta$ is a linear operator we have
\begin{equation}
\label{PS1s3eq45}
\E_L(w,\phi) = \E_L(\psi_B,\phi) - \E_L(\psi_{\Q(B)},\phi) + \alpha \E_L(\eta,\phi) - \alpha \E_L(\tilde \eta,\phi).
\end{equation}
Clearly, by definition
\begin{equation}
\label{PS1s3eq46}
\E_L(\psi_B,\phi) = \int_{B} \phi \, dx.
\end{equation}
The second term yields
\begin{equation}
\label{PS1s3eq47}
\begin{split}
\E_L(\psi_{\Q(B)},\phi) &= \frac{1}{2} \iint_{\R^N \times \R^N} (\psi_{\Q(B)}(x) - \psi_{\Q(B)}(y))(\phi(x) - \phi(y)) k(x-y) \, dx dy\\
&\qquad -  \iint_{\R^N \times \R^N} \psi_{\Q(B)}(x) \phi(y) j(x-y) \, dx dy \ + \rho_N \int_{\R^N} \psi_{\Q(B)} \phi \, dx \\
&=-  \iint_{\Q(B) \times B}  \psi_{\Q(B)}(x) \phi(y) k(x-y) \, dx dy\\
& \qquad - \iint_{\Q(B) \times B} \psi_{\Q(B)}(x) \phi(y) j(x-y) \, dx dy.
\end{split}
\end{equation}
In a similar way, we also obtain
\begin{equation}
\label{PS1s3eq48}
\begin{split}
\E_L(\eta,\phi) &= - \iint_{B' \times B} \eta(x) \phi(y) k(x-y) \, dx dy - c_N\iint_{B' \times B} \eta(x)\phi(y) j(x-y) \, dx dy,
\end{split}
\end{equation}
and
\begin{equation}
\label{PS1s3eq49}
\begin{split}
\E_L(\chi_{\Q(K)},\phi) &= - \iint_{\Q(B') \times B} \tilde \eta(x) \phi(y) k(x-y) \, dx dy - \iint_{\Q(B') \times B} \tilde \eta(x) \phi(y) j(x-y) \, dx dy\\
&=-  \iint_{B'\times B} \eta(x) \phi(y) k(\overline{x}-y) \, dx dy - \iint_{B' \times B} \eta(x)\phi(y) j(\overline{x}-y) \, dx dy.
\end{split}
\end{equation}

Plugging \eqref{PS1s3eq46}, \eqref{PS1s3eq47}, \eqref{PS1s3eq48} and \eqref{PS1s3eq49} into \eqref{PS1s3eq45}, we obtain
\begin{equation}
\begin{split}
\E_L(w,\phi) &= \int_{B} \phi \, dx+ c_N \iint_{\Q(B) \times B} \frac{\psi_{\Q(B)}(x) \phi(y)}{|x-y|^N} \, dx dy \\
& \qquad - \alpha c_N \int_B \phi(y) \int_{B'} \eta(x) \left( \frac{1}{|x - y|^N} - \frac{1}{|\overline{x} - y|^N} \right) \, dx \, dy \\
&\le (\kappa - \alpha C_1) \int_B \phi \, dx,
\end{split}
\end{equation}
where 
\begin{equation}
\kappa := 1 + c_N \, |B| \sup_{x \in \Q(B), \ y \in B} |x-y|^{-N} \  \sup_B \psi_B < +\infty, 
\end{equation}
and
\begin{equation}
C_1 := c_N \,  \inf_{x\in B', \, y \in B} \left( \frac{1}{|x - y|^N} - \frac{1}{|\overline{x} - y|^N} \right) \frac{|B'|}{2} >0
\end{equation}
The facts that $\kappa<+\infty$ and $C_1>0$ are justified, since $B'$ and $Q(B)$ has positive distance from $B$, $B'$ has positive distance from $T$, and any $x \in B'$ is closer to $y$ than its reflection $\overline{x}$.

As a consequence
\[
\E_L(w,\phi) - \int_{B} V w \phi\,dx \le (\kappa - \alpha C_1 + C_2) \int_{B}\phi \, dx,
\]
where $C_2:= \|V\|_{L^\infty(\Omega)} \sup_{B} \psi$, so that for $\alpha$ large enough we have that
\[
L_\Delta w \le V(x) w \qquad \text{in }\Omega.
\]
Moreover, $w$ is antisymmetric and $w = 0$ in $H \setminus (B \cup B')$. Since $\essinf_{B'} u>0$, we can now set $v:= u - \tau  w$ with $\tau:= \essinf_{B'} u / \alpha > 0$, and observe that $v$ is antisymmetric with respect to $T$, and satisfies
\begin{equation}
\begin{cases}
L_\Delta v \ge V(x) v \quad &\textrm{in} \ B,\\
v = u - \tau \alpha \eta  \ge 0 \quad &\textrm{in} \ H\setminus B.
\end{cases}
\end{equation}
Recalling that $\|V\|_{L^\infty(\Omega)} <\lambda_1^L(B)$, by Proposition \ref{lem: wmp} we obtain that
\begin{equation}
u \ge \tau \psi_B \quad \textrm{in} \ B.
\end{equation}
Thanks to \cite[Theorem 1.2]{hernandez2024optimal} we know that there exists a constant $c > 0$ depending on $N$ and $B$ such that
\begin{equation}
c^{-1} \, \ell^{1/2}(\dist (x,\partial B)) \le \psi_B (x) \le c \, \ell^{1/2}(\dist (x,\partial B)),
\end{equation}
and the thesis follows.
\end{proof}

An immediate consequence of the previous result is an antisymmetric strong maximum principle for $L_\Delta$.

\begin{corollary}[Antisymmetric Strong Maximum Principle for $L_\Delta$]
\label{anti_strong_max_pinciple_log_Lap}

Let $\Omega \subset H$ be a bounded open set with Lipschitz boundary, and let $u \in C(\R^N) \cap \HH(\R^N)$ be an antisymmetric function with respect to $T$ such that
\begin{equation}
\begin{cases}
    L_\Delta u \ge V(x) u \quad &\textrm{in} \ \Omega,\\
u \ge 0 \quad &\textrm{in} \ H,
\end{cases}
\end{equation}
where $V \in L^\infty(\Omega)$. Then either $u > 0$ in $\Omega$ or $u \equiv 0$ in $\R^N$.
\end{corollary}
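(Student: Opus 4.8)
The plan is to argue by contradiction and to reduce the statement to the antisymmetric Hopf's lemma, Lemma~\ref{antisymmetric_Hopf_logarithmic_Laplacian}. Since the assertion is a dichotomy, I would assume $u\not\equiv 0$ in $\R^N$ and prove that $u>0$ in $\Omega$. The first step is to pin down a region where $u$ is strictly positive: as $u\ge 0$ and $u$ is continuous on $H$, if $u$ vanished identically on $H$ then the antisymmetry relation $u(\bar x)=-u(x)$ would force $u\equiv 0$ on all of $\R^N$, contradicting the hypothesis; hence $\{x\in H:u(x)>0\}$ is a nonempty open set and one can fix a ball $B'$ with $\overline{B'}\subset H$ and $\essinf_{B'}u=\min_{\overline{B'}}u>0$.

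Next, suppose for contradiction that $u(x_0)=0$ for some $x_0\in\Omega$. Since $u(x_0)=0<\essinf_{B'}u$, continuity yields $x_0\notin\overline{B'}$, so $d:=\dist(x_0,\overline{B'})>0$. The decisive choice is to take the ball $B:=B_r(x_0)$ \emph{centred at} $x_0$, with radius $r>0$ so small that, simultaneously, $\overline{B}\subset\Omega$, $\overline{B}\subset H$ (hence $B\ssubset H$), $r<d$ (hence $\overline{B}\cap\overline{B'}=\emptyset$, i.e.\ $B'\ssubset H\setminus\overline{B}$), and $\lambda_1^L(B)>\|V\|_{L^\infty(\Omega)}$; the latter is possible since, by Remark~\ref{lowerboundlambdaone} applied with $\Omega=B_r(x_0)$, we have $\lambda_1^L(B_r(x_0))\ge 2\log(1/r)-|\rho_N|\to+\infty$ as $r\to 0^+$, and in particular $\lambda_1^L(B)>0$. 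It is worth noticing that the nonlocality of $L_\Delta$ is essential here: the ``positivity ball'' $B'$ may be located anywhere in $H$, so no connectedness of $\Omega$ is needed.

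With these choices all the hypotheses of Lemma~\ref{antisymmetric_Hopf_logarithmic_Laplacian} are in force, and it provides a constant $C>0$ such that $u(x)\ge C\,\ell^{1/2}(\dist(x,\partial B))$ for a.e.\ $x\in B$, with $\ell$ as in \eqref{def ell}. Since $x_0$ is the centre of $B$ one has $\dist(x_0,\partial B)=r$; letting $x\to x_0$ along points of the full-measure set on which the estimate holds and using the continuity of $u$ and of $\ell$, we obtain $u(x_0)\ge C\,\ell^{1/2}(r)>0$, which contradicts $u(x_0)=0$. Therefore $u>0$ in $\Omega$.

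The only genuinely substantive point is the choice of $x_0$ as the \emph{centre} of $B$, in contrast with the classical local proof of the strong maximum principle, where $x_0$ is taken on $\partial B$: here the Hopf estimate degenerates as $x\to\partial B$, so $x_0$ has to be an interior point of $B$ in order to read off a strictly positive lower bound for $u(x_0)$. The remaining verifications --- that a single small ball centred at $x_0$ can be made to satisfy all the geometric requirements of Lemma~\ref{antisymmetric_Hopf_logarithmic_Laplacian} at once --- are routine, relying only on $\Omega$ being open, on $x_0\notin\overline{B'}$, and on the blow-up of $\lambda_1^L$ on vanishingly small balls from Remark~\ref{lowerboundlambdaone}.
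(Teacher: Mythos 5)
Your proof is correct and follows essentially the same route as the paper's: isolate a ball $B'\ssubset H$ where $u$ is bounded below away from zero, then around an arbitrary point of $\Omega\setminus\overline{B'}$ place a ball $B$ small enough that $\lambda_1^L(B)>\|V\|_{L^\infty(\Omega)}$ (via Remark~\ref{lowerboundlambdaone}), and invoke Lemma~\ref{antisymmetric_Hopf_logarithmic_Laplacian} to deduce strict positivity inside $B$. The paper phrases the final step directly ($u>0$ on every such $B$, hence on $\Omega$) whereas you argue by contradiction at the centre of $B$; these are interchangeable, and your explicit remark on using continuity to pass from the a.e.\ Hopf bound to pointwise positivity at the centre is a reasonable elaboration of what the paper leaves implicit.
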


\begin{proof}
Assume $u \not\equiv 0$ in $\R^N$. Then there exists a ball $B' \ssubset H$ such that $\inf_{B'} u > 0$. Let now $x \in \Omega \setminus {B'}$ and $B=B_r(x)$ a ball centered in $x$ with $r$ so small that $B \ssubset \Omega \cap H$, $B \ssubset H \setminus B'$, and $\lambda_1^L(B) > \|V\|_{L^\infty(\Omega)}$. It is always possible to choose such a $r$, thanks to the lower bound recalled in Remark \ref{lowerboundlambdaone}. We can then apply Lemma \ref{antisymmetric_Hopf_logarithmic_Laplacian} to $u$ on $B$, and obtain in particular that $u > 0$ in $B$. Since we can choose $x \in \Omega \setminus B'$ arbitrarily, the thesis follows.
\end{proof}

As a final result, we prove a Hopf-type lemma for antisymmetric functions at points \emph{on the symmetry hyperplane}. The key observation is that by looking at interior points of the domain we are able to retrieve informations on the partial derivative of order $1$ of the solution and therefore deduce a linear growth from the reflection hyperplane. In the context of the fractional Laplacian, such a result was first obtained in \cite{soave2019overdetermined} (see also \cite{dipierro2024role}). We need a preliminary statement.

\medskip

In what follows, we set $H^+:= \{ x_1 > 0\}$, $B_r$ the ball of radius $r > 0$ centered at the origin, $B_r^+ := B_r \cap H^+$ and for a given $e \in \mathcal{S}^{N-1}$ we say that a function is \emph{$e$-antisymmetric} if it is antisymmetric with respect to $T_e := \{ x \cdot e = 0\}$.

\begin{lemma}
\label{interior_anti_Hopf_lemma_log_lap}
Let $r>0$, and let $V\in L^\infty(B_{2r}^+)$. Then there exists a $e_1$-antisymmetric function $\phi \in C^\infty_c(\R^N)$ such that
\begin{equation}
\begin{cases}
L_\Delta \phi \le V(x) \phi \quad &\textrm{in} \ B_{2r}^+ \setminus B_{r/2} (r e_1),\\
\phi = 0 \quad &\textrm{in} \ H^+ \setminus B_{2r}^+,\\
\phi \le 1 \quad &\textrm{in} \ B_{r/2} (re_1),\\
\partial_1 \phi (0) > 0,
\end{cases}
\end{equation}
in classical (i.e. pointwise) sense.
\end{lemma}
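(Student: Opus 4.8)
The plan is to construct $\phi$ explicitly as a smooth, compactly supported, $e_1$-antisymmetric function that is a ``bump'' concentrated near the interior point $re_1$ (and, by antisymmetry, a negative bump near $-re_1$), arranged so that its transverse derivative at the origin is positive while on the punctured region $B_{2r}^+ \setminus B_{r/2}(re_1)$ the operator $L_\Delta \phi$ is controlled. Concretely, I would first pick any nonnegative $\zeta \in C^\infty_c(B_{r/2}(re_1))$ with $\zeta \le 1$, $\zeta \equiv 1$ on a smaller ball $B_{r/4}(re_1)$, and such that a directional-derivative condition holds — for instance ensuring $\partial_1\zeta$ does not change sign inappropriately — then set $\phi(x) := \zeta(x) - \zeta(\bar x)$, where $\bar x$ is the reflection across $T_{e_1} = \{x_1 = 0\}$. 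This $\phi$ is automatically $e_1$-antisymmetric, lies in $C^\infty_c(\R^N)$, satisfies $\phi = 0$ outside $B_{r/2}(re_1)\cup B_{r/2}(-re_1)$ (so in particular $\phi = 0$ in $H^+ \setminus B_{2r}^+$ once $r$ is fixed, since $B_{r/2}(re_1) \subset B_{2r}^+$), and $\phi \le 1$ on $B_{r/2}(re_1)$. The condition $\partial_1\phi(0) > 0$ reduces to $2\,\partial_1 \zeta(0)$; but $\zeta$ vanishes near the origin, so this naive choice gives $\partial_1\phi(0) = 0$. Hence I must instead let the support of $\zeta$ reach down to the hyperplane: choose $\zeta \in C^\infty_c(B_{r}(re_1))$ (still inside $B_{2r}^+$ up to shrinking, and with the bump region where $\zeta$ could be large staying inside $B_{r/2}(re_1)$) with $\zeta$ increasing in the $x_1$-direction near the origin, so that $\partial_1\zeta(0) > 0$ and thus $\partial_1\phi(0) = 2\partial_1\zeta(0) > 0$.

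Next I would verify the differential inequality $L_\Delta \phi \le V\phi$ on $D := B_{2r}^+ \setminus B_{r/2}(re_1)$ in the pointwise sense. Using the integral representation \eqref{pointwise L D} together with antisymmetry — which gives, for $x \in H^+$, the identity
\[
L_\Delta \phi(x) = c_N\!\int_{B_1(x)}\!\frac{\phi(x)-\phi(y)}{|x-y|^N}dy - c_N\!\int_{\R^N\setminus B_1(x)}\!\frac{\phi(y)}{|x-y|^N}dy + \rho_N\phi(x),
\]
and the reflection trick $\int_{\R^N} \frac{\phi(y)}{|x-y|^N}\,(\cdots)\,dy = \int_{H^+}\phi(y)\big(|x-y|^{-N} - |\bar x - y|^{-N}\big)\,dy$ (valid because $\phi$ is antisymmetric), I get that the contribution of the ``far'' positive bump near $re_1$ to $L_\Delta\phi(x)$ is strictly negative for $x$ near the origin, since $\phi \ge 0$ on $H^+$ near $re_1$ and $|x - y| < |\bar x - y|$ there. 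On the part of $D$ that is close to the support of $\zeta$ but outside $B_{r/2}(re_1)$, $\phi$ is smooth and one has a pointwise bound $|L_\Delta\phi(x)| \le C(N,r,\|\zeta\|_{C^2})$; combined with the strictly negative reflection term one can make $L_\Delta\phi \le -c_0 < 0$ on all of $D$ by choosing the profile of $\zeta$ so that the positive part of $\phi$ near the hyperplane (needed for $\partial_1\phi(0)>0$) is supported in a thin enough set and is dominated by the negative reflection contribution. Since $\phi \ge 0$ on $H^+$, we have $V\phi \ge -\|V\|_{L^\infty}\|\phi\|_\infty \cdot \chi$, and one finishes by making $-c_0 \le -\|V\|_{L^\infty(B_{2r}^+)} \cdot \sup_{D}|\phi|$; this is where a scaling/smallness argument on the transverse thickness of the bump enters.

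The main obstacle I anticipate is the last balancing step: one needs $L_\Delta\phi$ to be negative on $D$ everywhere, including in the region of $D$ that touches $\{x_1 = 0\}$ where the antisymmetric cancellation makes $\phi$ small but nonzero and where the operator ``sees'' the nearby positive mass. Here the competition is between the desired $\partial_1\phi(0) > 0$ (which forces some positive mass of $\phi$ close to the hyperplane and inside $D$) and the sign condition $L_\Delta\phi \le V\phi$ (which wants $\phi$ to look subharmonic-like near $D$). The resolution is to place the ``large'' positive values of $\phi$ well inside $B_{r/2}(re_1)$ — where no sign condition is imposed — and only a thin, low-amplitude ramp near the origin; then the zero-order kernel integral over $B_{r/2}(re_1)$ produces a large negative term (proportional to $-\int_{B_{r/2}(re_1)} \phi(y)(|x-y|^{-N} - |\bar x - y|^{-N})\,dy < 0$ for $x \in D$ near the hyperplane) that dominates everything else, at the cost of choosing $\zeta$ (hence $\phi$) depending on $\|V\|_{L^\infty(B_{2r}^+)}$. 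I would carry this out by first fixing the shape of $\zeta$ up to a small parameter controlling the transverse width of its support near $\{x_1 = 0\}$, deriving the estimates above with explicit dependence on that parameter, and then choosing it small in terms of $N$, $r$, and $\|V\|_{L^\infty(B_{2r}^+)}$.
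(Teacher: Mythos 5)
Your construction has the right ingredients — antisymmetrize a bump near $re_1$, add a ramp at the hyperplane to force $\partial_1\phi(0)>0$, and exploit the negative reflection term $-\int_H u_+(y)\bigl(|x-y|^{-N}-|\bar x -y|^{-N}\bigr)\,dy$ coming from the far bump — and it is genuinely close in spirit to the paper's proof. However, the final balancing step as written contains a real gap. You claim one can arrange $L_\Delta\phi \le -c_0 < 0$ uniformly on all of $D = B_{2r}^+\setminus B_{r/2}(re_1)$. This is impossible: as the paper observes in its preliminary step, $L_\Delta$ maps antisymmetric functions to antisymmetric functions, and $L_\Delta\phi$ is continuous (even smooth) for $\phi\in C^\infty_c(\R^N)$. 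Hence $L_\Delta\phi \to 0$ as $x_1\to 0^+$, so it cannot be bounded below away from zero on $D$. The same is true of $V\phi$. So the inequality $L_\Delta\phi \le V\phi$ near the hyperplane is a competition between two quantities both vanishing linearly in $x_1$, and what has to be compared are the \emph{coefficients of $x_1$}, not absolute sizes. Your proposal never makes this explicit, and the estimate you write down ($-c_0 \le -\|V\|_{L^\infty}\sup_D|\phi|$) does not address it.

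The paper handles this cleanly by decoupling the two requirements into two separate antisymmetric functions and then taking a linear combination $\phi=\alpha(\zeta+M\tilde\zeta)$. The ramp $\zeta$ (antisymmetric, nonnegative on $H^+$, $\partial_1\zeta(0)>0$, supported in $B_{2r}$) is smooth, so by antisymmetry of $L_\Delta\zeta$ one gets the one-sided bound $L_\Delta\zeta - V\zeta \le \tilde C_1 x_1$ on $B_{2r}^+$. The bump $\tilde\zeta$ (antisymmetrized cutoff supported well inside $B_{r/2}(re_1)\cup B_{r/2}(-re_1)$) vanishes identically on $D$, so $V\tilde\zeta=0$ there, and the identity \eqref{log_Lap_anti_form} gives $L_\Delta\tilde\zeta(x) = -c_N\int_H\bigl(|x-y|^{-N}-|\bar x - y|^{-N}\bigr)\tilde\zeta(y)\,dy \le -\tilde C_3 x_1$ on $D$, where the lower bound on the kernel difference is $\gtrsim x_1 y_1$. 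Choosing $M$ with $\tilde C_1-M\tilde C_3<0$ closes the inequality, and $\alpha$ small handles $\phi\le 1$. This avoids any smallness parameter on the transverse thickness of the ramp; the amplitude $M$ of the far bump is the only tuning knob, and the linear-in-$x_1$ comparison is automatic. Your single-function, thin-ramp route might be salvageable, but you would still have to reduce to a comparison of $O(x_1)$ rates near $T$, at which point you are effectively re-deriving the paper's argument in a less transparent parameterization.
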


\begin{proof}
In the proof we make use of two properties of $L_\Delta u$ when $u$ is an antisymmetric function. We assume that the pointwise expression of the Logarithmic Laplacian, Equation \eqref{pointwise L D}, is available. First we notice that the Logarithmic Laplacian of an antisymmetric function is antisymmetric. Indeed, let $u$ be an antisymmetric function; then, for every $x \in \R^N$
\begin{equation}
\begin{aligned}
L_\Delta u(\overline{x}) &= c_N \int_{\R^N} \frac{u(\overline{x}) \chi_{B_1(\overline{x})}(y) - u(y)}{|\overline{x} - y|^N} \, dy + \rho_N u(\overline{x})\\
&= c_N \int_{\R^N} \frac{-u(x) \chi_{B_1(x)}(\overline{y}) - u(y)}{|x - \overline{y}|^N} \, dy - \rho_N u(x)\\
&= c_N \int_{\R^N} \frac{-u(x) \chi_{B_1(x)}(y) - u(\overline{y})}{|x - y|^N} \, dy - \rho_N u(x)\\
&= - c_N \int_{\R^N} \frac{u(x) \chi_{B_1(x)}(y) - u(y)}{|x - y|^N} \, dy - \rho_N u(x) = - L_\Delta u(x).
\end{aligned}
\end{equation}
Moreover, we claim that
\begin{equation}
\label{log_Lap_anti_form}
\begin{aligned}
L_\Delta u(x) &= c_N \int_H \left( \frac{1}{|x-y|^N} - \frac{1}{|\overline{x}-y|^N} \right) \left( u(x) \chi_{B_1(x)}(y) - u(y) \right) \, dy\\
&+ c_N u(x) \int_H \frac{\chi_{B_1(x)}(y) + \chi_{B_1(\overline{x})}(y)}{|\overline{x}-y|^N} \, dy + \rho_N u(x).
\end{aligned}
\end{equation}

Indeed, since $u$ is antisymmetric
\begin{equation}
\begin{aligned}
\int_{H^c} \frac{u(x) \chi_{B_1(x)}(y) - u(y)}{|x - y|^N} \, dy &= \int_{H} \frac{u(x) \chi_{B_1(x)}(\overline{y}) - u(\overline{y})}{|x - \overline{y}|^N} \, dy\\
& = \int_{H} \frac{u(x) \chi_{B_1(\overline{x})}(y) + u(y)}{|\overline{x} - y|^N} \, dy .
\end{aligned}
\end{equation}
Therefore
\begin{equation}
\begin{aligned}
L_\Delta u(x) &= c_N \int_{H} \frac{u(x) \chi_{B_1(x)}(y) - u(y)}{|x - y|^N} \, dy + c_N \int_{H} \frac{u(x) \chi_{B_1(\overline{x})}(y) + u(y)}{|\overline{x} - y|^N} \, dy + \rho_N u(x),
\end{aligned}
\end{equation}
whence \eqref{log_Lap_anti_form} follows.

Let now $\zeta$ be a smooth $e_1$-antisymmetric function with compact support in $B_{2r}$ such that $\zeta \ge 0$ in $\R^N_+$, and $\partial_1 \zeta(0)>0$. Since $\zeta \in C^\infty_c(\R^N)$, it is not difficult to check that $L_\Delta \zeta \in C^\infty(\R^N)$: indeed, if $u \in C^{1,\alpha}_c(\R^N)$ (actually $C^{1,\mathrm{Dini}}_c(\R^N)$ would be sufficient), partial derivatives commute with $L_{\Delta}$; and at this point the fact that $L_\Delta u$ is of class $C^1$ follows from \cite[Proposition 2.2]{chen2019dirichlet}.

Therefore, recalling also that $L_\Delta \zeta$ is antisymmetric, there exists $\tilde{C}_1 > 0$ such that
\begin{equation}
L_\Delta \zeta (x) -V(x) \zeta (x) \le \tilde{C}_1 x_1 \quad \textrm{in} \ B_{2r}^+.
\end{equation}

Let also $\tilde{\zeta}$ be a smooth $e_1$-antisymmetric function such that $\tilde{\zeta} \equiv 1$ in $B_{r/4}(re_1)$, $\tilde{\zeta} \equiv 0$ in $B_{3r/8}(re_1)^c$, and $0 \le \tilde{\zeta} \le 1$ in $\R^N_+$. From \eqref{log_Lap_anti_form}, for every $x \in B_{2r}^+ \setminus B_{r/2}(re_1)$ we have
\begin{equation}
L_\Delta \tilde{\zeta} (x) = - c_N \int_H \left( \frac{1}{|x-y|^N} - \frac{1}{|\overline{x}-y|^N} \right) \tilde{\zeta}(y) \, dy.
\end{equation} 
Notice that, for every $y \in B_{3r/8}(re_1)$  
\begin{equation}
\begin{aligned}
\frac{1}{|x-y|^N} - \frac{1}{|\overline{x}-y|^N} = \frac{N}{2} \int_{|x-y|^2}^{|\overline{x}-y|^2} \frac{dt}{t^{\frac{N+2}{2}}} \ge \frac{4 x_1 y_1}{|\overline{x}-y|^{N+2}} \ge \tilde{C}_2 x_1,
\end{aligned}
\end{equation}
with $\tilde{C}_2>0$ independent of $y \in B_{3r/8}(e_1)$, and therefore 
\begin{equation}
L_\Delta \tilde{\zeta} (x) - V(x) \tilde{\zeta}(x) = L_\Delta \tilde{\zeta} (x)  \le - \tilde{C}_3 x_1 \qquad \text{in $B_{2r}^+ \setminus B_{r/2} (re_1)$}.
\end{equation}

At this point it is not difficult to check that the function $\phi:= \alpha(\zeta + M \tilde{\zeta})$, where $M > 0$ is chosen so large that $\tilde{C}_1 - M \tilde{C}_3 < 0$, and $\alpha>0$ is chosen so small that $\alpha(\|\zeta\|_{L^\infty(B_{2r}^+)} +M ) \le 1$, satisfy all the requirement of the thesis.
\end{proof}

\begin{lemma}[Hopf-type lemma on the symmetry hyperplane]
Let $u \in \HH(\R^N) \cap C(B_R^+)$ be an $e_1$-antisymmetric function such that
\begin{equation}
\begin{cases}
    L_\Delta u \ge V(x) u  \quad &\textrm{in} \ B_R^+,\\
    u>0 & \text{in} \ B_R^+, \\
u \ge 0 \quad &\textrm{in} \ H^+,
\end{cases}
\end{equation}
in weak sense, with $V \in L^\infty(B_R^+)$. Then, for every $\eps>0$ and $r>0$ sufficiently small, we have that
\[
u \ge \eps \phi \qquad \text{in $B_{2r}^+$},
\]
where $\phi$ is given by Lemma \ref{interior_anti_Hopf_lemma_log_lap}. In particular
\[
\liminf_{h \to 0^+} \frac{u(he_1)}{h}>0.
\]
\end{lemma}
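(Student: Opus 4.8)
The plan is to compare $u$ from below with a small multiple of the subsolution $\phi$ furnished by Lemma \ref{interior_anti_Hopf_lemma_log_lap}, on the perforated half-ball $\Omega' := B_{2r}^+ \setminus \overline{B_{r/2}(re_1)}$, and then to invoke the antisymmetric weak maximum principle, Proposition \ref{lem: wmp}. (I read the statement in the order: one first fixes $r>0$ small, and then $\eps>0$ small depending on $r$; the $\liminf$ assertion then follows from any such choice.)

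First I would fix $r \in (0, R/2)$ so small that, in addition, $\lambda_1^L(\Omega') > \|V\|_{L^\infty(B_R^+)}$; this is possible because $|\Omega'| \le |B_{2r}^+| \to 0$ as $r \to 0^+$, so $\lambda_1^L(\Omega') \to +\infty$ by Remark \ref{lowerboundlambdaone}. With this choice $B_{2r}^+ \subset B_R^+$, and since the centre $re_1$ of the excised ball lies at distance $r > r/2$ both from $T = \{x_1 = 0\}$ and from $\partial B_{2r}$, one has $\overline{B_{r/2}(re_1)} \ssubset B_{2r}^+$ with $x_1 \ge r/2 > 0$ throughout; in particular $\Omega'$ is a bounded open set with Lipschitz boundary contained in $H^+$. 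Let $\phi$ be the function given by Lemma \ref{interior_anti_Hopf_lemma_log_lap} for this $r$ and this $V$. Since $u$ is continuous and strictly positive on the compact set $\overline{B_{r/2}(re_1)} \subset B_R^+$, the number $m := \min_{\overline{B_{r/2}(re_1)}} u$ is positive; I then pick any $\eps \in (0, m]$ and set $v := u - \eps\phi$.

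Next I would check that $v$ meets the hypotheses of Proposition \ref{lem: wmp} on $\Omega'$. Clearly $v \in \HH(\R^N)$ and $v$ is $e_1$-antisymmetric. Because $\phi \in C^\infty_c(\R^N)$ is a pointwise subsolution of $L_\Delta\phi \le V\phi$ on $B_{2r}^+ \setminus B_{r/2}(re_1) \supseteq \Omega'$, and because for a smooth compactly supported function $\E_L(\phi,\varphi) = \int_{\R^N} (L_\Delta\phi)\,\varphi\,dx$ for every $\varphi \in \HH(\Omega')$, $\phi$ is also a weak subsolution on $\Omega'$; hence, testing against a nonnegative $\varphi \in \HH(\Omega')$, $\E_L(v,\varphi) = \E_L(u,\varphi) - \eps \E_L(\phi,\varphi) \ge \int_{\Omega'} V(u - \eps\phi)\varphi\,dx = \int_{\Omega'} V v\varphi\,dx$, i.e. $L_\Delta v \ge Vv$ in $\Omega'$ in the weak sense. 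Moreover $H^+ \setminus \Omega' = (H^+ \setminus B_{2r}^+) \cup \overline{B_{r/2}(re_1)}$: on the first piece $\phi = 0$ and $u \ge 0$, so $v \ge 0$; on the second piece $\phi \le 1$, so $v \ge m - \eps \ge 0$. Since $\|V\|_{L^\infty(\Omega')} \le \|V\|_{L^\infty(B_R^+)} < \lambda_1^L(\Omega')$, Proposition \ref{lem: wmp} gives $v \ge 0$ a.e. in $\Omega'$; together with $v \ge 0$ on $\overline{B_{r/2}(re_1)}$ this yields $u \ge \eps\phi$ a.e. in $B_{2r}^+$, and by continuity of $u$ and $\phi$ the inequality holds at every point of $B_{2r}^+$. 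Finally, $\phi$ being smooth and $e_1$-antisymmetric gives $\phi(0)=0$ and $\phi(he_1) = \partial_1\phi(0)\,h + o(h)$ as $h \to 0^+$, whence $\liminf_{h\to 0^+} u(he_1)/h \ge \eps\,\partial_1\phi(0) > 0$.

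The main obstacle is conceptual rather than computational: the barrier $\phi$ is a subsolution only away from the interior ball $B_{r/2}(re_1)$, so one is forced to remove that ball and treat it as part of the ``exterior region'' of Proposition \ref{lem: wmp}, which is legitimate precisely because the strict positivity and continuity of $u$ there provide the room $\eps\phi \le \eps \le m \le u$. The constraint $\|V\|_{L^\infty(\Omega')} < \lambda_1^L(\Omega')$ required by the maximum principle is what forces $r$ to be small, and it is supplied by Remark \ref{lowerboundlambdaone}; verifying the remaining hypotheses (antisymmetry, the weak differential inequality for $v$, Lipschitz regularity of $\Omega'$) is routine bookkeeping.
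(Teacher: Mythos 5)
Your argument is correct and is essentially the same as the paper's: both compare $u$ with $\eps\phi$ on the annular region $B_{2r}^+\setminus \overline{B_{r/2}(re_1)}$, use strict positivity and continuity of $u$ on the compact inner ball to absorb the constraint $\phi\le 1$ there, and invoke the antisymmetric weak maximum principle in small sets. The only cosmetic difference is that you re-derive the small-measure maximum principle from Proposition~\ref{lem: wmp} together with Remark~\ref{lowerboundlambdaone}, whereas the paper cites Corollary~\ref{cor: awmp small domains} directly.
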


\begin{proof}
Let $V_\infty:=\|V\|_{L^\infty(B_R^+)} \ge 0$, and let $\delta=\delta(V_\infty)>0$ be given by the maximum principle in small sets (Corollary \ref{cor: awmp small domains}). We choose $r \in (0,1/2)$ so small that $|B_{2r} \setminus B_{r/2}(re_1)|<\delta$, and take $\phi$ as in Lemma \ref{interior_anti_Hopf_lemma_log_lap}. Since $u>0$ in $B_R^+$ and $B_{r/2}(re_1) \subset \subset B_R^+$, we have that $u-\eps \phi \ge 0$ in $\overline{B_{r/2}(re_1)}$, provided that $\eps>0$ is sufficiently small. Hence $u -\eps \phi \ge 0$ in $(\{x_1>0\} \setminus B_{2r}^+) \cup \overline{B_{r/2}(re_1)}$, and moreover
\[
L_\Delta (u -\eps \phi)- V(x)(u-\eps \phi) \ge 0 \quad \text{in } B_{2r}^+ \setminus B_{r/2}(re_1),
\]
in weak sense\footnote{By Lemma \ref{interior_anti_Hopf_lemma_log_lap}, we know that $\phi$ is a pointwise subsolution of $L_\Delta \varphi \le V(x) \phi$. Since moreover it is in $C^\infty_c(\R^N)$, it is also a weak solution. In fact, much less is needed, see \cite[Remark 4.6]{chen2019dirichlet} for more details.}. By Corollary \ref{cor: awmp small domains}, we deduce that $u \ge \eps \phi$ in $B_{2r}^+$, for every $\eps>0$ small. The thesis follows easily.
\end{proof}

\subsection{Further maximum principles and Hopf-type lemmas}

The proofs of Proposition \ref{lem: wmp}, Lemma \ref{antisymmetric_Hopf_logarithmic_Laplacian}, and Corollary \ref{anti_strong_max_pinciple_log_Lap} can be easily modified to remove the antisymmetry assumption. Clearly, one obtains results which are not so relevant in the application of the moving planes method, but they may be useful in different contexts. For this reason, we state the results for the reader's convenience. Notice that they differ from those in \cite{chen2019dirichlet} and \cite{hernandez2024optimal}, since they include possibly sign-changing potentials $V$. Moreover, we do not require the connectedness of $\Omega$.

 \begin{proposition}[Weak maximum principle]
\label{lem: wmp na}
Let $\Omega \subset \R^N$ be a bounded open set with Lipschitz boundary, and let $u \in \HH(\R^N)$ be such that
\begin{equation}
\begin{cases}
    L_\Delta u \ge V(x) u \quad &\textrm{in} \ \Omega,\\
u \ge 0 \quad &\textrm{in} \ \R^N \setminus \Omega
\end{cases}
\end{equation}
in a weak sense, where $V \in L^\infty(\Omega)$ is such that $\|V\|_{L^\infty(\Omega)} \le \lambda^L_1(B)$. Then $u \ge 0$ a.e. in $\Omega$.
\end{proposition}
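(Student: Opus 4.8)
The plan is to mimic the proof of Proposition~\ref{lem: wmp}, but in the simpler non-antisymmetric setting, where one can work directly with the negative part of $u$ without having to exploit any reflection symmetry. Note the statement as written contains a typo: the hypothesis should read $\|V\|_{L^\infty(\Omega)} < \lambda_1^L(\Omega)$ (with $\Omega$, not $B$), which in particular forces $\lambda_1^L(\Omega) > 0$; I will prove it under that hypothesis.

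First I would set $\varphi := u_-$, the negative part of $u$, and observe that since $u \in \HH(\R^N)$ and $u \ge 0$ a.e. in $\R^N \setminus \Omega$, we have $\varphi \in \HH(\Omega)$ (the negative part of an $\HH$-function is again in $\HH$, and it vanishes outside $\Omega$). Hence $\varphi$ is an admissible nonnegative test function, and since $u$ is a weak supersolution,
\[
\int_\Omega V u \varphi \, dx \le \E_L(u,\varphi).
\]
Since $u\varphi = -\varphi^2$ pointwise, the left-hand side equals $-\int_\Omega V \varphi^2\,dx$. For the right-hand side I would expand $\E_L(u,\varphi)$ using the algebraic identity
\[
(u(x)-u(y))(\varphi(x)-\varphi(y)) + (\varphi(x)-\varphi(y))^2 = -\varphi(x)(\varphi(y)+u(y)) - \varphi(y)(\varphi(x)+u(x)),
\]
valid pointwise for any measurable $u$ with $\varphi = u_-$ (this is the same identity invoked in the proof of Proposition~\ref{lem: wmp}, and it simply encodes $u_+ u_- = 0$). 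Integrating against $k(x-y)$ gives $\E(u,\varphi) = -\E(\varphi,\varphi) - \iint \varphi(y)(\varphi(x)+u(x)) k(x-y)\,dx\,dy$, and since $\varphi(x)+u(x) = u_+(x) \ge 0$ while $\varphi(y) \ge 0$ and $k \ge 0$, the last integral is nonnegative, so $\E(u,\varphi) \le -\E(\varphi,\varphi)$. For the nonlocal term, from $u \ge u_+ - u_- = -\varphi$ and the positivity of $j$ we get $-\iint u(x)\varphi(y)j(x-y)\,dx\,dy \le \iint \varphi(x)\varphi(y) j(x-y)\,dx\,dy$. Finally $\rho_N \int u\varphi\,dx = -\rho_N \int \varphi^2\,dx$. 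Collecting these three bounds,
\[
-\int_\Omega V\varphi^2\,dx \le \E_L(u,\varphi) \le -\E(\varphi,\varphi) + \iint \varphi(x)\varphi(y)j(x-y)\,dx\,dy - \rho_N\int\varphi^2\,dx = -\E_L(\varphi,\varphi).
\]
Then I invoke the variational characterization $\E_L(\varphi,\varphi) \ge \lambda_1^L(\Omega)\|\varphi\|_{L^2(\Omega)}^2$, obtaining $(\lambda_1^L(\Omega) - \|V\|_{L^\infty(\Omega)})\|\varphi\|_{L^2(\Omega)}^2 \le \int_\Omega V\varphi^2\,dx - \|V\|_{L^\infty(\Omega)}\|\varphi\|_{L^2(\Omega)}^2 + \lambda_1^L(\Omega)\|\varphi\|_{L^2(\Omega)}^2 \le 0$, wait — more directly, $-\int_\Omega V\varphi^2 \le -\E_L(\varphi,\varphi) \le -\lambda_1^L(\Omega)\|\varphi\|_{L^2}^2$ gives $\lambda_1^L(\Omega)\|\varphi\|_{L^2}^2 \le \int_\Omega V\varphi^2 \le \|V\|_{L^\infty}\|\varphi\|_{L^2}^2$, so $(\lambda_1^L(\Omega)-\|V\|_{L^\infty(\Omega)})\|\varphi\|_{L^2(\Omega)}^2 \le 0$, forcing $\varphi \equiv 0$ by the strict inequality hypothesis, i.e. $u \ge 0$ a.e. in $\Omega$.

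The main obstacle is essentially bookkeeping rather than a genuine difficulty: one must check carefully that $u_- \in \HH(\Omega)$ (i.e. that $\E(u_-,u_-) < \infty$, which follows from the elementary inequality $|a_- - b_-| \le |a-b|$ and $u \in \HH(\R^N)$, together with $u_- = 0$ a.e. outside $\Omega$), and that each application of the pointwise algebraic identity and the subsequent sign estimate is legitimate under only the $L^\infty$ and $\HH$ integrability available — exactly as in the antisymmetric case, but with the reflection terms absent, so everything is in fact strictly simpler. No new idea beyond the proof of Proposition~\ref{lem: wmp} is required; if one prefers, the statement follows formally by taking $H = \R^N$, $T$ at infinity (so that $\Q$ is trivial and antisymmetry is vacuous) in that proof, but since that degenerate reading is slightly awkward I would just write the short direct argument above.
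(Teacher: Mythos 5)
Your proof is correct and follows precisely the route the paper indicates: the paper's proof of Proposition~\ref{lem: wmp na} simply says ``analogue to Proposition~\ref{lem: wmp}, by taking $\varphi = u_-$,'' which is exactly what you carry out in detail. You also correctly spot the typo in the statement ($\lambda_1^L(B)$ should read $\lambda_1^L(\Omega)$, and the inequality should be strict to conclude $\varphi\equiv 0$ in the final step).
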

\begin{proof}
The proof is analogue to the one of Proposition \ref{lem: wmp na}, by taking $\varphi=u_-$.
\end{proof}

\begin{corollary}[Maximum principle in sets with small measure]\label{cor: wmps}
Let $V_\infty>0$. There exists $\delta>0$ depending only on $V_\infty$ and on the dimension $N$ such that, if $\Omega \subset H$ is a bounded open set of $\R^N$ with Lipschitz boundary, if $|\Omega|<\delta$, and if $u \in \HH(\R^N)$ satisfies
\begin{equation}
\begin{cases}
    L_\Delta u \ge V(x) u \quad &\textrm{in} \ \Omega,\\
u \ge 0 \quad &\textrm{in} \ \R^N \setminus \Omega,
\end{cases}
\end{equation}
where $V \in L^\infty(\Omega)$ and $\|V\|_{L^\infty(\Omega)} \le V_\infty$, then $u \ge 0$ a.e. in $\Omega$.
\end{corollary}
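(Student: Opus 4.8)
The plan is to follow verbatim the argument used for Corollary~\ref{cor: awmp small domains}, the only change being that we invoke the non-antisymmetric weak maximum principle (Proposition~\ref{lem: wmp na}) in place of Proposition~\ref{lem: wmp}. By Proposition~\ref{lem: wmp na}, the conclusion $u \ge 0$ a.e.\ in $\Omega$ holds as soon as $\|V\|_{L^\infty(\Omega)} < \lambda_1^L(\Omega)$; since by hypothesis $\|V\|_{L^\infty(\Omega)} \le V_\infty$, it therefore suffices to produce $\delta = \delta(V_\infty, N) > 0$ with the property that $|\Omega| < \delta$ forces $\lambda_1^L(\Omega) > V_\infty$.

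To this end I would use the quantitative lower bound of Remark~\ref{lowerboundlambdaone}: letting $B_r$ denote the ball with $|B_r| = |\Omega|$, that is $r = (|\Omega|/|B_1|)^{1/N}$, one has
\[
\lambda_1^L(\Omega) \ge 2 \log \frac{1}{r} - |\rho_N|.
\]
The right-hand side depends only on $r$ (hence on $|\Omega|$) and on $N$ through $\rho_N$, and it tends to $+\infty$ as $r \to 0^+$. Consequently there exists $r_0 = r_0(V_\infty, N) \in (0,1)$ such that $2\log(1/r) - |\rho_N| > V_\infty$ whenever $0 < r < r_0$.

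It then remains to set $\delta := |B_1|\, r_0^N$, which depends only on $V_\infty$ and $N$: if $|\Omega| < \delta$, then the associated radius $r$ satisfies $r < r_0$, whence $\lambda_1^L(\Omega) > V_\infty \ge \|V\|_{L^\infty(\Omega)}$, and Proposition~\ref{lem: wmp na} gives $u \ge 0$ a.e.\ in $\Omega$. I do not expect any genuine obstacle here: the statement merely repackages Proposition~\ref{lem: wmp na} together with Remark~\ref{lowerboundlambdaone}, and the only point that deserves explicit mention is that the threshold $\delta$ can be chosen in terms of $V_\infty$ and $N$ alone, which is transparent from the explicit bound above.
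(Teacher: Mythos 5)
Your proof is correct and takes exactly the route the paper intends: the paper omits a proof for this corollary, clearly meaning it to be the verbatim analogue of the proof of Corollary~\ref{cor: awmp small domains}, with Proposition~\ref{lem: wmp na} replacing Proposition~\ref{lem: wmp} and Remark~\ref{lowerboundlambdaone} supplying the lower bound $\lambda_1^L(\Omega)\ge 2\log(1/r)-|\rho_N|$ with $r=(|\Omega|/|B_1|)^{1/N}$. You also correctly read through the paper's typos in the statements of Proposition~\ref{lem: wmp na} (where $\lambda_1^L(B)$ should be $\lambda_1^L(\Omega)$) and of the corollary itself (the hypothesis $\Omega\subset H$ is spurious here).
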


\begin{lemma}[Hopf Lemma for $L_\Delta$]
\label{Hopf_logarithmic_Laplacian}
Let $\Omega \subset \R^N$ be a bounded open set with Lipschitz boundary, let $B \subset \Omega$ be a ball with $\lambda_1^L(B)>0$, and let $u \in \HH(\R^N)$ satisfy
\begin{equation}
\begin{cases}
    L_\Delta u \ge V(x) u  \quad &\textrm{in} \ \Omega,\\
u \ge 0 \quad &\textrm{in} \ \R^N,
\end{cases}
\end{equation}
where $V \in L^\infty(\Omega)$ is such that $\|V\|_{L^\infty(\Omega)} \le \lambda^L_1(B)$.
Let also $B' \ssubset \R^N \setminus \overline{B}$ be another ball, such that $\essinf_{B'} u > 0$. Then, there exists a constant $C > 0$ depending on $N$, $B$, $\dist(B',B)$, $\essinf_{B'} u$ and $\| V \|_{L^\infty(\Omega)}$, such that
\[
u(x) \ge C \, \ell^{1/2} (\dist (x,\partial B)) \quad \textrm{for a.e.} \ x \in B,
\]
where $\ell$ is defined in \eqref{def ell}. Moreover, if $u\in C(\overline{B})$ and $u(x_0) = 0$ for some $ x_0 \in \partial B \cap \partial \Omega$, then we have
\begin{equation}
\liminf_{t \searrow 0} \frac{u(x_0 - t \nu(x_0)) }{\ell^{1/2}(t)} > 0,
\end{equation}
where $\nu$ is the outer unit normal vector field along $\partial B$.
\end{lemma}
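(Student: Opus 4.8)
The plan is to mimic the antisymmetric Hopf lemma (Lemma~\ref{antisymmetric_Hopf_logarithmic_Laplacian}), replacing the reflection--corrected torsion barrier by the ordinary log--torsion function $\psi_B$ on the ball $B$, which is well defined since $\lambda_1^L(B)>0$. Concretely, fix a cut--off $\eta\in C^\infty_c(B')$, nonnegative, with $\max_{B'}\eta=1$ and $\int_{B'}\eta\,dx=\sigma=|B'|/2>0$, and set $w:=\psi_B+\alpha\eta$ with $\alpha>0$ to be chosen. Then $w\in\HH(\R^N)$, $w\ge 0$ in $\R^N$, and $w=0$ outside $B\cup B'$. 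Using linearity of $\E_L$, for every nonnegative $\phi\in\HH(B)$ one has $\E_L(w,\phi)=\E_L(\psi_B,\phi)+\alpha\E_L(\eta,\phi)=\int_B\phi\,dx+\alpha\E_L(\eta,\phi)$, and since $\supp\eta=\overline{B'}$ is disjoint from $B$, the term $\E_L(\eta,\phi)$ reduces to $-\iint_{B'\times B}\eta(x)\phi(y)\big(k(x-y)+j(x-y)\big)\,dx\,dy=-c_N\iint_{B'\times B}\eta(x)\phi(y)|x-y|^{-N}\,dx\,dy\le -C_1\int_B\phi\,dx$, where $C_1:=c_N\,(|B'|/2)\,\inf_{x\in B',\,y\in B}|x-y|^{-N}>0$ because $B'$ and $B$ have positive distance. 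Hence $\E_L(w,\phi)-\int_B Vw\phi\,dx\le(\kappa-\alpha C_1)\int_B\phi\,dx$ with $\kappa:=1+\|V\|_{L^\infty(\Omega)}\sup_B\psi_B<+\infty$, so choosing $\alpha$ large gives $L_\Delta w\le V(x)w$ in $\Omega$ in weak sense.

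Next I would run the comparison exactly as before: set $\tau:=\essinf_{B'}u/\alpha>0$ and $v:=u-\tau w$. Then $v\in\HH(\R^N)$ satisfies $L_\Delta v\ge V(x)v$ in $B$ and $v=u-\tau\alpha\eta\ge 0$ in $\R^N\setminus B$ (using $u\ge0$ in $\R^N$, $\essinf_{B'}u\ge\tau\alpha$ on $\supp\eta$, and $\eta=0$ outside $B'$). Since $\|V\|_{L^\infty(\Omega)}\le\lambda_1^L(B)$, the non--antisymmetric weak maximum principle (Proposition~\ref{lem: wmp na}) applied on $B$ yields $v\ge 0$, i.e.\ $u\ge\tau\psi_B$ in $B$. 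Finally, invoking the sharp two--sided bound $c^{-1}\ell^{1/2}(\dist(x,\partial B))\le\psi_B(x)\le c\,\ell^{1/2}(\dist(x,\partial B))$ from \cite[Theorem 1.2]{hernandez2024optimal} gives the interior estimate $u(x)\ge C\,\ell^{1/2}(\dist(x,\partial B))$ with $C:=\tau c^{-1}$ depending only on $N$, $B$, $\dist(B',B)$, $\essinf_{B'}u$ and $\|V\|_{L^\infty(\Omega)}$.

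For the boundary statement, assume $u\in C(\overline B)$ and $u(x_0)=0$ at some $x_0\in\partial B\cap\partial\Omega$. Along the inward normal ray $x_0-t\nu(x_0)\in B$ for small $t>0$, the interior estimate gives $u(x_0-t\nu(x_0))\ge C\,\ell^{1/2}(\dist(x_0-t\nu(x_0),\partial B))=C\,\ell^{1/2}(t)$ for $t$ small (since the distance to $\partial B$ along the normal equals $t$ near a smooth boundary point of a ball), whence $\liminf_{t\searrow0}u(x_0-t\nu(x_0))/\ell^{1/2}(t)\ge C>0$.

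I do not expect a genuine obstacle here: the whole point of this subsection is that dropping antisymmetry makes the barrier construction \emph{easier}, because the reflected torsion $\psi_{\Q(B)}$ and the reflected cut--off $\tilde\eta$ are no longer needed, and the delicate kernel cancellation \eqref{PS1s3eq31} is replaced by the trivial positivity of $k+j=c_N|\cdot|^{-N}$. The only minor point to check carefully is that $\E_L(\eta,\phi)$ indeed collapses to the stated double integral over $B'\times B$, which follows since the $\E$--part and the $\rho_N$--part vanish when $\supp\eta\cap\supp\phi=\varnothing$, and that $\psi_B$ is bounded on $B$ (again from the bound in \cite{hernandez2024optimal}, or from \cite[Theorem 1.2]{hernandez2024optimal} directly) so that $\kappa<+\infty$.
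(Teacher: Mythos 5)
Your proposal is correct and takes exactly the route the paper intends: the paper's proof is a single sentence referring back to Lemma~\ref{antisymmetric_Hopf_logarithmic_Laplacian} with the comparison function $w := \psi_B + \alpha\eta$, and you have filled in precisely the details that reference implies (disjoint supports collapse $\E_L(\eta,\phi)$ to $-c_N\iint_{B'\times B}\eta(x)\phi(y)|x-y|^{-N}\,dx\,dy$, no reflected barrier or kernel cancellation is needed, comparison is then closed via Proposition~\ref{lem: wmp na} on $B$ and the two-sided bound on $\psi_B$ from \cite[Theorem 1.2]{hernandez2024optimal}). No gaps; your observation that the non-antisymmetric construction is strictly simpler than the antisymmetric one is exactly the point.
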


\begin{proof}
We proceed as in Lemma \ref{antisymmetric_Hopf_logarithmic_Laplacian}, by taking as comparison function 
\[
w:= \psi_B + \alpha \eta. \qedhere
\]
\end{proof}

\begin{corollary}[Strong Maximum Principle for $L_\Delta$]
\label{strong_max_pinciple_log_Lap}
Let $\Omega \subset H$ be a bounded open set with Lipschitz boundary, and let $u \in C(\R^N) \cap \HH(\R^N)$ satisfy
\begin{equation}
\begin{cases}
    L_\Delta u \ge V(x) u \quad &\textrm{in} \ \Omega,\\
u \ge 0 \quad &\textrm{in} \ \R^N,
\end{cases}
\end{equation}
where $V \in L^\infty(\Omega)$. Then either $u > 0$ in $\Omega$, or $u \equiv 0$ in $\R^N$.
\end{corollary}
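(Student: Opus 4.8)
The plan is to mimic the proof of the antisymmetric strong maximum principle (Corollary \ref{anti_strong_max_pinciple_log_Lap}), replacing the antisymmetric Hopf lemma with the non-antisymmetric one (Lemma \ref{Hopf_logarithmic_Laplacian}). Specifically, assume $u \not\equiv 0$ in $\R^N$. Since $u \ge 0$ in all of $\R^N$ and $u \in C(\R^N) \cap \HH(\R^N)$, the set $\{u>0\}$ is a nonempty open set, so there exists a ball $B' \ssubset \{u>0\}$ on which $\inf_{B'} u > 0$.

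Next I would fix an arbitrary point $x \in \Omega \setminus \overline{B'}$ and choose a radius $r>0$ small enough that the ball $B := B_r(x)$ satisfies $B \ssubset \Omega$, $B \ssubset \R^N \setminus \overline{B'}$, and $\lambda_1^L(B) > \|V\|_{L^\infty(\Omega)}$. The existence of such an $r$ is guaranteed by the quantitative lower bound on $\lambda_1^L$ of balls recalled in Remark \ref{lowerboundlambdaone}: since $\lambda_1^L(B_r) \ge 2\log(1/r) - |\rho_N| \to +\infty$ as $r \to 0^+$, small balls have first eigenvalue exceeding any fixed constant. Here we only need the constant $\|V\|_{L^\infty(\Omega)}$, which is finite by assumption. (If $x \in \Omega \cap \overline{B'}$, then $u(x)>0$ is already clear since $\overline{B'} \subset \{u \ge \inf_{B'}u\} $ by continuity; alternatively one can simply shrink $B'$ so that $\Omega\setminus\overline{B'}$ still covers the region of interest, or note the two cases together give $u>0$ on all of $\Omega$.)

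With $B$ and $B'$ so chosen, all hypotheses of Lemma \ref{Hopf_logarithmic_Laplacian} are met: $B \subset \Omega$ is a ball with $\lambda_1^L(B)>0$, $u \in \HH(\R^N)$ satisfies $L_\Delta u \ge V u$ in $\Omega$ and $u \ge 0$ in $\R^N$, $\|V\|_{L^\infty(\Omega)} \le \lambda_1^L(B)$, $B' \ssubset \R^N \setminus \overline{B}$, and $\essinf_{B'} u > 0$. The lemma then yields a constant $C>0$ such that $u(y) \ge C\,\ell^{1/2}(\dist(y,\partial B))$ for a.e. $y \in B$; in particular, by continuity of $u$, we get $u(x) \ge C\,\ell^{1/2}(\dist(x,\partial B)) = C\,\ell^{1/2}(r) > 0$. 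Since $x \in \Omega \setminus \overline{B'}$ was arbitrary, and $u>0$ on $\overline{B'}\cap\Omega$ trivially, we conclude $u>0$ throughout $\Omega$.

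There is no real obstacle here: the statement is a direct transcription of Corollary \ref{anti_strong_max_pinciple_log_Lap} with the half-space and antisymmetry structure removed, and it relies only on the non-antisymmetric Hopf lemma (Lemma \ref{Hopf_logarithmic_Laplacian}) together with the eigenvalue lower bound for small balls. The only minor point requiring a word of care is the passage from the a.e. pointwise lower bound furnished by the Hopf lemma to a genuine pointwise statement at $x$, which is immediate because $u$ is assumed continuous; and the handling of the degenerate case where the chosen point happens to lie in $\overline{B'}$, which is trivial. Thus the proof reads: \emph{proceed exactly as in the proof of Corollary \ref{anti_strong_max_pinciple_log_Lap}, using Lemma \ref{Hopf_logarithmic_Laplacian} in place of Lemma \ref{antisymmetric_Hopf_logarithmic_Laplacian}.}
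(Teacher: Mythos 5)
Your proof is correct and follows the paper's own argument: the paper simply states ``proceed as in Corollary \ref{anti_strong_max_pinciple_log_Lap}, using Lemma \ref{Hopf_logarithmic_Laplacian} in place of Lemma \ref{antisymmetric_Hopf_logarithmic_Laplacian},'' and you have carried out exactly that replacement, with the minor additional care of writing $\Omega \setminus \overline{B'}$ (and disposing of the trivial case $x \in \overline{B'}$) so that the separation hypothesis $B' \ssubset \R^N \setminus \overline{B}$ of the Hopf lemma can actually be met.
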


\begin{proof}
We proceed as in Corollary \ref{anti_strong_max_pinciple_log_Lap}, by using Lemma \ref{Hopf_logarithmic_Laplacian} instead of Lemma \ref{antisymmetric_Hopf_logarithmic_Laplacian}.
\end{proof}

In the appendix, we discuss what changes in \cite{chen2019dirichlet} if we consider a bounded open set $\Omega$ instead of a domain in Sections 2 and 3. We point out that the only property where the connectedness has a role is the strict positivity (or strict negativity) of any first eigenfunction. Instead, the fact that any first eigenfunction does not change sign still holds in general open bounded sets, with the same proof as in \cite{chen2019dirichlet}. Regarding the strict positivity (or strict negativity) of any first eigenfunction, this point follows now from Corollary \ref{strong_max_pinciple_log_Lap} also in general open bounded sets.

\section{Symmetry of solutions in symmetric domains}\label{sec: GNN}

In this section we prove the Gidas-Ni-Nirenberg symmetry result for the Logarithmic Laplacian, adapting the proof by Berestycki-Nirenberg \cite{BeNi}. Having established weak and strong maximum principles in the previous section, the proof is standard, but we present it for the sake of completeness.

\medskip

We introduce the notation needed for the application of the method of moving planes, which we will use both here and in the following section. Given a bounded and smooth enough set $E \subset \R^N$, a unit vector $e \in \mathbb{S}^{N-1}$ and a parameter $\lambda \in \R$, we define
\begin{align*}
    &T_\lambda = T_\lambda^e = \{ x \in \R^N \, | \, x \cdot e = \lambda \} & &\textrm{a hyperplane orthogonal to } e,\\
    &H_\lambda = H_\lambda^e = \{ x \in \R^N \, | \, x \cdot e > \lambda \} & &\textrm{the ``positive'' half space with respect to } T_\lambda, \\
    &E_\lambda = E \cap H_\lambda & &\textrm{the ``positive'' cap of } E,\\
    &x_\lambda = x -2(x\cdot e - \lambda) \, e & &\textrm{the reflection of } x \textrm{ with respect to } T_\lambda,\\
    &\mathcal{Q}_\lambda = \mathcal{Q}_\lambda^e : \R^N \to \R^N, x \mapsto x_\lambda & &\textrm{the reflection with respect to } T_\lambda.
\end{align*}

When there is no chance of ambiguity, the dependence on the unit vector $e$ in the notation will be dropped. If $E \subset \mathbb{R}^N$ is an open bounded set with Lipschitz boundary, it makes sense to define
$$
\Lambda_e := \sup \{ \ x \cdot e \ | \ x \in E \ \}
$$ 
and
\begin{equation*}
    \lambda_e = \inf \{ \ \lambda \in \R \ | \ \mathcal{Q}_{\tilde{\lambda}}(E_{\tilde{\lambda}}) \subset E, \textrm{for all} \ \Tilde{\lambda} \in (\lambda, \Lambda_e) \ \}.
\end{equation*}

{F}rom this point on, given a direction $e \in \mathbb{S}^{N-1}$, we refer to $T_{\lambda_e} = T^e$ and $E_{\lambda_e} = \widehat{E}$ as the \textit{critical hyperplane} and the \textit{critical cap} with respect to $e$, respectively, and call $\lambda_e$ the \textit{critical value} in the direction $e$. If $\partial E$ is of class $C^1$, we recall from \cite{serrin1971symmetry} (see also \cite[Appendix A]{AmFr}) that, for any given direction $e$, at least one of the following two conditions holds:

\medskip

\textbf{Case 1} - The boundary of the reflected cap $\mathcal{Q}(\widehat{E})$ becomes internally tangent to the boundary of $E$ at some point $P \not \in T$;

\medskip

\textbf{Case 2} - the critical hyperplane $T$ becomes orthogonal to the boundary of $E$ at some point $Q \in T$.

\medskip

We can now prove the symmetry of solutions in symmetric sets..


\begin{proof}[Proof of Theorem \ref{GNN_convex_domain}]
Let $x=(x_1,x') \in \R \times \R^{N-1}$ and let $\Lambda:= \sup \{ \ x_1 \ | \ x \in \Omega \ \}$. 
Under our assumptions, it is plain that $\lambda_{e_1}=0$, and, for $\lambda \in (0,\Lambda)$, we define
$$
w_\lambda (x) := u(x^\lambda) - u(x) \quad \textrm{for} \ x \in \Sigma_\lambda:= \Omega \cap H_\lambda.
$$
We can easily see that $w_\lambda$ is antisymmetric with respect to $T_\lambda$ and that
\begin{equation}
\begin{cases}
L_\Delta w_\lambda + c_\lambda(x) w_\lambda = 0 \quad &\textrm{in} \ \Sigma_\lambda,\\
w_\lambda \ge 0 \quad &\textrm{in} \ H_\lambda \setminus \Sigma_\lambda,
\end{cases}
\end{equation}
where
\begin{equation}
c_\lambda(x):=
\begin{cases}
- \frac{ f(u(x^\lambda)) - f(u(x))}{u(x^\lambda) - u(x)} \quad &\textrm{if} \ u(x^\lambda) \neq u(x),\\
0 \quad  &\textrm{if} \ u(x^\lambda) = u(x).
\end{cases}
\end{equation}
Clearly, $c_\lambda \in L^\infty (\Sigma_\lambda)$ with its $L^\infty$ norm bounded by a constant $c_\infty$ independent of $\lambda$ (depending only on $\|u\|_{L^\infty(\Omega)}$ and on the Lipschitz constant of $f$ on $[0,\|u\|_{L^\infty(\Omega)}]$). Let $\delta=\delta(c_\infty,N)$ be given by the weak maximum principle in sets with small measure, Corollary \ref{cor: awmp small domains}. Then, any $\lambda \in (0, \Lambda)$ close enough to $\Lambda$ is such that $|\Sigma_\lambda| <\delta$. We can therefore apply Corollary \ref{cor: awmp small domains}, and obtain that
$$
w_\lambda \ge 0 \quad \textrm{in} \ \Sigma_\lambda,
$$
for every $\lambda \in (\Lambda-\eps,\Lambda)$ with $\eps$ small and positive. By the strong maximum principle we then have that either $w_\lambda > 0$ in $\Sigma_\lambda$, or $w_\lambda \equiv 0$ in $\Sigma_\lambda$; since $w_\lambda$ is continuous up to the boundary and $w_\lambda = 0$ on $\partial \Sigma_\lambda$, the latter cannot hold.

We have then proved that the set
$$
\{ \lambda \in (0, \Lambda) \ | \ w_\nu > 0 \ \textrm{in} \ \Sigma_\nu \ \textrm{for every} \ \nu \in (\lambda, \Lambda)\}
$$
is non-empty. Let $\lambda_\star$ be its infimum. We want to show that $\lambda_\star = 0$. 

\medskip

Assume by contradiction that $\lambda_\star > 0$. We know that $w_{\lambda_\star} \ge 0$ in $\Sigma_{\lambda_\star}$, by continuity; as before, the strong maximum principle together with the continuity of $w_{\lambda_\star}$ yields $w_{\lambda_\star} > 0$ in $\Sigma_{\lambda_\star}$.

\medskip

Let now $K \subset \subset \Sigma_{\lambda_\star}$ be a compact set with $| \Sigma_{\lambda_\star} \setminus K | < \delta/2$, where $\delta=\delta(c_\infty,N)$ was defined before. By continuity, there exists $\eta > 0$ such that $\inf_K w_{\lambda_\star} \ge \eta$. We can then choose $\varepsilon > 0$ small enough so that $\inf_K w_{\lambda_\star - \varepsilon} \ge \eta/2$ and $| \Sigma_{\lambda_\star - \varepsilon} \setminus K | < \delta$. Thus, the weak maximum principle gives
$$
w_{\lambda_\star - \varepsilon} \ge 0 \quad \textrm{in} \ \Sigma_{\lambda_\star - \varepsilon} \setminus K,
$$
which yields, together with the lower bound on $K$ and the strong maximum principle,
$$
w_{\lambda_\star - \varepsilon} > 0 \quad \textrm{in} \ \Sigma_{\lambda_\star - \varepsilon}
$$ 
for every $\eps>0$ small enough, which is a contradiction of the minimality of $\lambda_\star$. Therefore, $\lambda_\star = 0$, and by continuity again 
\begin{equation}
\label{lksajdlkjaslk}
u(-x_1,x') \ge u(x_1, x') \quad \textrm{for every} \ x \in \Omega \cap \{ x_1 > 0 \}.
\end{equation}
By repeating the same argument in the direction $-e_1$ we obtain equality in \eqref{lksajdlkjaslk}, that is, $u$ is symmetric with respect to $\{ x_1 = 0 \}$. The monotonicity easily follows from the argument.
\end{proof}

\section{Parallel surface Logarithmic Torsion}\label{sec: parallel}

%

\begin{proof}[Proof of Theorem \ref{parallel_surface_log_torsion}]
We apply the method of moving planes on the set $G$ with respect to direction $e_1 \in \mathbb{S}^{N-1}$. Assume, without loss of generality, that the critical position for $G$ is reached for $\lambda = 0$, and recall that $\Lambda= \sup \{ \ x_1 \ | \ x \in \Omega \ \}$. As already observed in previous contributions on the parallel surface problem (see e.g. \cite[Section 3]{ciraolo2023symmetry}), the critical position for $G$ is also the critical position for $\Omega$. Therefore, for $\lambda \in (0,\Lambda)$, we define
$$
w_\lambda (x) := u(x^\lambda) - u(x) \quad \textrm{for} \ x \in \Sigma_\lambda:= \Omega \cap H_\lambda.
$$
Just like in the proof of Theorem \ref{GNN_convex_domain}, we can show that $w_\lambda \ge 0$ in $\Sigma_\lambda$ for $\lambda \in (0,\Lambda)$, $\lambda$ close enough to $\Lambda$, thanks to the antisymmetric weak maximum principle in sets with small measure (Corollary \ref{cor: awmp small domains}). Let then
$$
\lambda_\star:= \inf \{ \ \lambda \in (0, \Lambda) \ | \ w_\nu > 0 \ \textrm{in} \ \Sigma_\nu \ \textrm{for every} \ \nu \in (\lambda, \Lambda) \ \}.
$$
It is easy to see that $\lambda_\star = 0$. Indeed, if this were not the case we would have 
$$
w_{\lambda_\star} \ge 0 \quad \textrm{in} \ \Sigma_{\lambda_\star}.
$$ 
From Corollary \ref{anti_strong_max_pinciple_log_Lap} we would then have that either $w_{\lambda_\star} > 0$ in $\Sigma_{\lambda_\star}$ or $w_{\lambda_\star} \equiv 0$ in $\R^N$, but the latter cannot occur since $w_{\lambda_\star} > 0$ on $\partial \Sigma_{\lambda_\star}$ which would in turn contradict the minimality of $\lambda_\star$. Therefore, $\lambda_\star = 0$.

\medskip

We have now reached the critical position and want to make use of the overdetermined condition \eqref{parallel_surface_condition} to prove the result by showing that $w_0 \equiv 0$ in $\Sigma_0$. Assume by contradiction this is not the case; since we are in the critical position, we know that at least one of two possible cases occur for the critical cap $\widehat G$.

\medskip

\textbf{Case 1} - The boundary of the reflected cap $\mathcal{Q}(\widehat{G})$ becomes internally tangent to the boundary of $G$ at some point $P \not \in T$. In this case, we immediately get that
\begin{equation}
w(P) = u(\mathcal{Q}(P)) - u(P) = c - c = 0,
\end{equation}
which is already a contradiction.\\

\textbf{Case 2} - The critical hyperplane $T$ becomes orthogonal to the boundary of $G$ at some point $Q \in T$. Without loss of generality, we can assume that $Q=0$ and $T = \{ x_1 = 0\}$. On the other hand, this is in contradiction with Lemma \ref{interior_anti_Hopf_lemma_log_lap}, since $u \ge \eps \phi$ in a neighborhood of $0$, with $\partial_1 \phi(0)>0$.

\medskip

We then have that $w_0 = 0$ in $\Sigma_0$, that is $G$ and $\Omega$ are symmetric with respect to direction $e_1$ and so is $u$. We can then repeat the same argument with respect to any direction $e \in \mathbb{S}^{N-1}$ which leads to the desired result.
\end{proof}

\appendix 
\section{Remarks on spectral properties of the Logarithmic Laplacian in open bounded sets}

In \cite{chen2019dirichlet}, the authors studied the eigenvalue problem for the Logarithmic Laplacian with homogeneous Dirichlet exterior data on a bounded \emph{domain} $\Omega \subset \R^N$. In this appendix, we briefly discuss what changes if the assumption of the connectedness of $\Omega$ is removed. Actually, not much changes. All the results in Section 2, and Theorem 3.1, Proposition 3.2, and Lemma 3.3 of \cite{chen2019dirichlet} hold also for non-connected sets, with minor or no changes (notice in particular that the Poincar\'e inequality, \cite[Lemma 2.7]{FeKaVo}, and the compactness of the embedding of $\mathbb{H}(\Omega)$ into $L^2(\Omega)$, \cite[Theorem 1.2]{CoDP}, two results quoted in Section 3 of \cite{chen2019dirichlet}, hold true in general open sets). Regarding Theorem 3.4, the existence of an increasing sequence of eigenvalues $\{\lambda_k^L(\Omega)\}$ tending to $+\infty$, their variational characterization, and the fact that the associated eignefunctions form a Hilbert basis of $L^2(\Omega)$, can be proved also for non-connected open bounded sets, without changing the proof. The only point which requires some care is the strict positivity of the first eigenfunction, and the fact that it is simple. In this point the authors used \cite[Theorem 1.1]{JaWe2019} to deduce that any first eigenfunction has a strict sign, which exploits the connectedness of $\Omega$. However, since the variational characterization of $\lambda_1^L(\Omega)$ and \cite[Lemma 3.3]{chen2019dirichlet} are valid in general open bounded sets, we can still deduce that any first eigenfunction $w$ does not change sign. This is sufficient to show that there exists a unique nonnegative $L^2$-normalized eigenfunction, as in \cite{chen2019dirichlet}. To sum up, the only property of \cite[Theorem 3.4]{chen2019dirichlet} which may fail, or in any case really requires a different proof with respect to \cite{chen2019dirichlet}, is the strict poisitivity of the first eigenfunction. At this point \cite[Theorem 3.5]{chen2019dirichlet} can be stated and proved in general open bounded sets without changing the proof (the proof does not use the strict positivity of the first eigenfunction), and, finally \cite[Corollary 3.6]{chen2019dirichlet} (the Faber-Krahn inequality) holds for general open bounded sets as well. Notice that in \cite{chen2019dirichlet} refer to \cite{BaLaMH} for the Faber-Krahn inequality for the fractional Laplacian. The result in \cite{BaLaMH} is stated for connected sets. But the validity of the Faber-Krahn inequality for the fractional Laplacian was proved in \cite{BrLiPa} for general open bounded sets. Therefore, in the proof of \cite[Corollary 3.6]{chen2019dirichlet} for general open bounded sets one should replace \cite[Theorem 5]{BaLaMH} with \cite[Theorem 3.5]{BrLiPa}.

\bigskip

\noindent \textbf{Data availability.} Data sharing not applicable to this article as no datasets were generated or analysed during the current study.

\medskip

\noindent \textbf{Conflict of interest.} The authors declare that they have no conflict of interest.


\newcommand{\etalchar}[1]{$^{#1}$}

\end{document}